\numberwithin{equation}{section}
\theoremstyle{plain}
\newtheorem{theorem}{Theorem}[section]
\newtheorem{lemma}[theorem]{Lemma}
\newtheorem{corollary}[theorem]{Corollary}
\newtheorem{remark}[theorem]{Remark}
\newtheorem{proposition}[theorem]{Proposition}
\newcommand{\ra}[1]{\renewcommand{\arraystretch}{#1}}
\DeclareMathOperator*{\argmin}{arg\,min} %
\DeclareMathOperator{\grad}{\nabla}
\DeclareMathOperator{\ddiv}{div}
\DeclareMathOperator{\sep}{:}  %
\newcommand{\discretized}[1]{\ensuremath{\boldsymbol{#1}}}
\newenvironment{keywords}{%
    \begingroup%
    \small%
    \textbf{Key words.}}{\endgroup}
\newenvironment{AMS}{%
    \begingroup%
    \small%
    \textbf{AMS subject classifications.}}{\endgroup}
\title{Adaptive non-intrusive reconstruction of solutions to high-dimensional parametric PDEs}
\author{Martin Eigel, Nando Farchmin, Sebastian Heidenreich and Philipp Trunschke}
\date{\today}
\begin{document}

\maketitle

\begin{abstract}
    Numerical methods for random parametric PDEs can greatly benefit from adaptive refinement schemes, in particular when functional approximations are computed as in stochastic Galerkin and stochastic collocations methods.
    This work is concerned with a non-intrusive generalization of the adaptive Galerkin FEM with residual based error estimation.
    It combines the non-intrusive character of a randomized least-squares method with the a posteriori error analysis of stochastic Galerkin methods.
    The proposed approach uses the Variational Monte Carlo method to obtain a quasi-optimal low-rank approximation of the Galerkin projection in a highly efficient hierarchical tensor format.
    We derive an adaptive refinement algorithm which is steered by a reliable error estimator.
    Opposite to stochastic Galerkin methods, the approach is easily applicable to a wide range of problems, enabling a fully automated adjustment of all discretization parameters.
    Benchmark examples with affine and (unbounded) lognormal coefficient fields illustrate the performance of the non-intrusive adaptive algorithm, showing the expected convergence rates of single-level strategies.
\end{abstract}

\begin{keywords}
    uncertainty quantification,
    adaptivity,
    low-rank tensor regression,
    tensor train,
    parametric PDEs,
    residual error estimator,
    stochastic Galerkin FEM
\end{keywords}

\begin{AMS}
    15A69, %
    62J02, %
    65N15, %
    65N35, %
    65Y20, %
\end{AMS}

\section{Introduction}%
\label{sec:intro}

High-dimensional parametric \emph{partial differential equations} (PDEs) play a crucial role in modern simulation methods used in the natural sciences and engineering.
Especially when uncertainties or variations in the data should be incorporated into the physical model, a parameter vector determining the data realizations leads to a discretization complexity that easily becomes extremely challenging to tackle due to the inherent ``curse of dimensionality''.
There has been very active research activity in particular in the area of Uncertainty Quantification (UQ) to better understand the structure of the problem and to mitigate the numerical obstacles with new methods.
Apart from sampling methods for the estimation of quantities of interest such as Monte Carlo sampling, functional approximations allow to exploit commonly encountered structured regularity of the PDE solutions to obtain much higher convergence rates.
The central contribution of this paper is the development of an adaptive sample-based Galerkin method in low-rank tensor format, which can be considered a non-intrusive generalization of the adaptive stochastic Galerkin FEM (ASGFEM) as e.g. derived in~\cite{EigelGittelson2014asgfem,EigelMarschall2018,EigelMarschall2020lognormal}.
It combines and generalizes previous results on the residual based reliable error estimator with the non-intrusive low-rank tensor reconstruction techniques from~\cite{EigelTrunschke2019vmc,EigelTrunschke2020} and~\cite{EigelFarchmin2021expTT}.
In contrast to the frequently used intrusive ASGFEM, the presented method is a versatile generalization that could easily be applied to a broad range of problems with only small modifications.
A main feature it shares with stochastic collocation (SC) methods is its sole dependence on pointwise solutions.
However, opposite to SC, a hierarchical tensor compression of the solution and coefficient field usually lead to a beneficial scaling with respect to the parameter dimensions, allowing to compute very high-dimensional problems that might otherwise only be tractable by neural network representations.

As a model problem we consider the parameter dependent Darcy equation
\begin{equation}
    \begin{aligned}
      \label{eq:setup:darcy}
      -\ddiv_x a(x,y) \grad_x u(x,y) &= f(x) \quad\mbox{in }D,\\
      u(x,y) &~= 0 \qquad\mbox{on }\partial D,
    \end{aligned}
\end{equation}
on some domain $D\subset\mathbb{R}^2$, where $y=(y_1,\dots,y_L)\subset\mathbb R^L$ is a high or even infinite dimensional parameter vector determining the (affine or nonlinear in $y$) coefficient field and hence the solution.
To counter the resulting (possibly) extensive memory complexity caused by a functional representation, the coefficient tensors of $a(x,y)$ and $u(x,y)$ are low-rank approximated in the \emph{Tensor Train} (TT) format via a least-squares regression technique coined the \emph{Variational Monte Carlo} (VMC) method~\cite{EigelTrunschke2019vmc}.
Given a sufficient amount of training samples, which can be estimated by a heuristic criterion, the obtained approximation is equivalent to the stochastic Galerkin projection with high probability~\cite{EigelTrunschke2020,Trunschke2021}.
In comparison to Monte Carlo methods, exploiting the regularity and low-rank approximability of the solution may drastically improve the rate of convergence as shown in~\cite{EigelTrunschke2019vmc}.

The resulting adaptive algorithm only requires access to samples $u(y^{(i)})$ (and possibly $a(y^{(i)})$) generated via a black-box solver.
Given an approximate solution reconstruction $w_N\approx u$, contributions of the deterministic, stochastic and algebraic errors can be evaluated, leading to a reliable upper bound of the energy error of the form
\begin{align}
  \label{eq:setup:est_total}
  \Vert u-w_N\Vert \lesssim \eta_\mathrm{det}(w_N) + \eta_\mathrm{sto}(w_N) + \eta_\mathrm{alg}(w_N).
\end{align}
These error estimates then steer the local refinements of the discrete space.
It is noteworthy that no direct interaction with the parametric solver is required besides communicating how the underlying mesh has to be changed.
The error estimators can be proven to be reliable and efficient in many cases, see~\cite{EigelGittelson2014asgfem,EigelGittelson2014convergence,BespalovPraetorius2019convergence,BespalovPraetorius2021optimality} and~\cite{Carstensen2012review} for a review of the underlying deterministic derivation.

We demonstrate the performance of the proposed algorithm for the model problem~\eqref{eq:setup:darcy} with both affine and lognormal diffusion coefficients $a(x,y)$.
It should be pointed out that our algorithm can be applied with only minor modifications to any linear or (mild) nonlinear problem that permits computable a posteriori error bounds.

\paragraph{Structure}

Section~\ref{sec:setup} introduces the model problem setting, its variational formulation as well as the spatial and stochastic discretization.
Section~\ref{sec:discretization} then examines how the diffusion field and the solution of~\eqref{eq:setup:darcy} can be obtained efficiently and non-intrusively.
In Section~\ref{sec:estimator}, we recall the residual based error estimator from~\cite{EigelGittelson2014asgfem,EigelMarschall2020lognormal} and derive a heuristic for controlling the regression error.
The resulting adaptive refinement strategy and the overall algorithm are presented in Section~\ref{sec:algorithm}.
Finally, we test the fully adaptive scheme in several examples with affine and lognormal coefficient fields in Section~\ref{sec:experiments}.

\paragraph{Related work}

Theoretical considerations about the class of parametric PDEs used in this paper~\eqref{eq:setup:darcy} can e.g.\ be found in the review articles~\cite{SchwabGittelson2011,Cohen2015}.
Moreover, the technically involved lognormal case is analysed in detail in~\cite{GalvisSarkis2009,Mugler2013phd,Bachmayr2017sparse}.

Functional representations of high-dimensional problem solutions can be obtained by spectral approximations, enabling optimal convergence rates numerically.
Most prominent are SC~\cite{Nobile2008,Nobile2008sparse,Ernst2014} and SG~\cite{LeMaitreKnio2010,GhanemSpanos1990} methods.
SC has the advantage of being non-intrusive and thus is easy to use with already existing simulation codes, whereas SG methods can be understood as an extension of classical finite element (FE) methods, requiring a problem-specific implementation.
To make these methods computationally feasible, different model reduction techniques can be incorporated.
In the context of this paper, two are most relevant: (i) adaptivity based on computable error estimators and (ii) low-rank compression with hierarchical tensor formats.

Adaptive algorithms based on a posteriori estimators have been developed to increase the physical FE space and the stochastic space automatically and problem dependent.
They can be seen as an extension of adaptive methods in deterministic FEM, see~\cite{EigelGittelson2014asgfem,EigelGittelson2014convergence,EigelMerdon2016,Tempone2022} for residual based estimators and~\cite{Bespalov2014,Bespalov2016,Bespalov2018,Crowder2019} for hierarchical estimators.
Alternative adjoint approaches for quantities of interest can be found in~\cite{Bryant2015,PrudhommeBryant2015}.

Refinement of the stochastic space typically incorporates enlarging the global polynomial basis by increasing the polynomial degrees and including more stochastic modes.
When using tensor formats, an additional error contribution that has to be controlled is the algebraic (compression) error, leading to an adjustment of the representation rank.
In case of affine coefficient fields, there are first results on the convergence of adaptive algorithms~\cite{EigelGittelson2014convergence,BespalovPraetorius2019convergence} and even optimality~\cite{BespalovPraetorius2021optimality} under certain conditions.
Recent results provide optimality in much greater generality by using wavelet expansions~\cite{Bachmayr2021adaptive}.

To circumvent exponential growth of the stochastic discretization space, hierarchical tensor formats can be used if the problem is low-rank representable.
Some details on different tensor formats and numerical algorithms can be found in~\cite{Bachmayr2016,Hackbusch2014,Nouy2017,Oseledets2009}. 
Our focus lies on the TT format~\cite{Oseledets2009}, which has been used with tremendous success for the solution of parametric PDEs and related UQ problems such as Bayesian inversion and random field representations, see~\cite{Dolgov2015,DolgovScheichl2019alsCross,Dolgov2019sampling,EigelGruhlke2020bayes,Dolgov2014}.
ASGFEM in hierachical tensor formats are presented in~\cite{EigelMarschall2018,EigelMarschall2019randomDomains} for affine coefficients.
The first ASGFEM for lognormal coefficients is developed in~\cite{EigelMarschall2020lognormal}.
In contrast to affine fields, nonlinear expansions as in the lognormal case cannot be represented easily in tensor formats and are in fact rather challenging to obtain.
Accordingly, a limitation of the ASGFEM in~\cite{EigelMarschall2020lognormal} is that it relies on a specifically tailored construction of the parametric field.
Alternatively, methods based on tensor completion~\cite{DolgovScheichl2019alsCross, Espig2014} or Galerkin projection~\cite{EigelFarchmin2021expTT} have been developed in recent years.
The approach presented in~\cite{EigelFarchmin2021expTT} even allows to govern the approximation error with quantities computed during the approximation, avoiding any computational overhead.
Since this can be employed with a wide range of coefficient fields, it is very much in the spirit of our method and is used in Section~\ref{sec:discretization}.
The basis for the least-squares tensor regression of parametric PDE solutions was laid with the VMC method in~\cite{EigelTrunschke2019vmc}.
An alternative technique is the tensor cross approximation of~\cite{Oseledets2010}. 
From a practical point of view, a major difference of the two methods lies in the integration of training samples.
The cross approximation evaluates the parametric black-box solver during runtime as the interpolation points are chosen adaptively (``active learning''), whereas the VMC method assumes precomputed (randomly sampled) evaluations of the solver for random parameter realizations (``passive learning'').

\section{The model problem}%
\label{sec:setup}

This section establishes the analytical foundations of the model problem~\eqref{eq:setup:darcy}.
We recall some details on the functional setting for two common types of random coefficient fields and point to references for an in depth analysis when necessary.
Throughout this work we assume $D\subset \mathbb{R}^2$ to be a polygonal bounded Lipschitz domain.
Moreover, without loss of generality, we limit ourselves to a deterministic source term $f\in L^2(D)$ and homogeneous Dirichlet boundary conditions since modelling the right-hand side and the boundary conditions as stochastic fields independent on the diffusion coefficient $a(x,y)$ would not introduce significant modifications.
With typical applications in e.g.\ stochastic groundwater flow modelling, the diffusion coefficient is often defined by a Karhunen-Lo\`eve type expansion of the form
\begin{align}
  \label{eq:setup:gamma}
  \gamma(x,y) = \gamma_0(x) + \sum_{\ell=1}^{L} \gamma_\ell(x) y_\ell
  \qquad\mbox{for } x\in D,
\end{align}
and almost all $y_\ell\sim\pi_\ell$ with independent distributions $\pi_\ell$, $L\in\mathbb{N}\cup\{ \infty \}$.
In many applications, however, the far more challenging exponential diffusion field
\begin{align}
  \label{eq:setup:kappa}
  \kappa(x,y) = \exp( \gamma(x,y) - \gamma_0(x))
\end{align}
has to be considered.
Solvability of~\eqref{eq:setup:darcy} for $a=\gamma$ follows directly from the uniform boundedness and positivity of the affine field~\eqref{eq:setup:gamma}~\cite{SchwabGittelson2011}.
Well-posedness of~\eqref{eq:setup:darcy} for $a=\kappa$ with unbounded parameters $y$ is significantly more involved and requires the introduction of adapted function spaces, cf.~\cite{Bachmayr2017sparse,Hoang2014,Gittelson2010,Mugler2013,GalvisSarkis2009}.
We refer to~\cite{EigelMarschall2020lognormal} for a concise review of the concepts that we use for the problem setting~\eqref{eq:setup:darcy}.

Let $\mathcal{X}:=H_0^1(D)$ be equipped with the standard norm $\Vert w\Vert_\mathcal{X}=\Vert \grad w\Vert_{L^2(D)}$ and let $\mathcal{F} := \{ \mu\in\mathbb{N}_0^{\infty}\colon \vert \operatorname{supp}\mu\vert < \infty \}$ be the set of finitely supported multi-indices, where $\operatorname{supp}\mu$ denotes the set of all indices of $\mu$ different from zero. 
For any $m\in\mathbb{N}_0$ and $n\in\mathbb{N}$, let $[m\sep n]:=\{ m,\dots,n-1 \}$, where $[m\sep n] := \{ 0 \}$ if $m\geq n$ and $[n] := [0\sep n-1]$.
Define the full tensor index set
\begin{align}
  \label{eq:setup:tensor_index_set}
  \Lambda_d := [d_1] \times \dots \times [d_L] \times [1] \times \dots \subset \mathcal{F},
  \qquad\mbox{for }L\in\mathbb{N}\mbox{ and }d\in\mathbb{N}^L.
\end{align}
We refer to $d_\ell\geq1$ as the (stochastic) dimension for the mode $\ell\in L$ and call a mode $\ell$ active if $d_\ell>1$.
By $\{P^\ell_j\}_{j=0}^\infty$ we denote a set of orthogonal and normalized polynomials in $L^2(\Gamma_\ell,\pi_\ell)$, where we assume $\Gamma_{\ell}\subset\mathbb{R}$ for $\ell\in\mathbb{N}$.
Moreover, we consider the tensor product case $\Gamma=\prod_{\ell\in\mathbb{N}}\Gamma_\ell$ and $\pi=\prod_{\ell\in\mathbb{N}}\pi_\ell$ and define an orthonormal product basis $\{ P_\mu \}_{\mu\in\mathcal{F}}$ of $L^2(\Gamma,\pi)$ by $P_\mu(y) := \prod_{\ell\in\mathbb{N}} P_{\mu_\ell}^\ell(y_\ell) = \prod_{\ell\in\operatorname{supp}(\mu)} P_{\mu_\ell}^\ell(y_\ell)$.
Note that the use of global polynomials is justified by the high regularity of the solution of~\eqref{eq:setup:darcy} with respect to the random variables~\cite{Cohen2010,Cohen2011,Hoang2014}.
In our numerical experiments we rely on (scaled) Hermite and Legendre polynomials as univariate basis functions.
Details on the normalization constants for the respective polynomials and an analytical expression for the triple products $\tau_{ijk}=\mathbb{E}[P_i^{\ell}P_j^{\ell}P_k^{\ell}]$ are given in Supplement~\ref{supplement:normalization_and_triple_products}.
We define the bilinear form
\begin{align}
  \label{eq:setup:B}
  B(w,v) := \int_{\Gamma} \int_{D} a(x,y)\grad w(x,y)\cdot\grad v(x,y) \,\mathrm{d}x \,\mathrm{d}\pi(y)
\end{align}
on $L^2(\Gamma,\pi;\mathcal{X})$ and denote the induced energy norm by $\Vert w\Vert_{B} := B(w,w)^{1/2}$.
We additionally abbreviate $\Vert w\Vert_{\pi,D}=\Vert w\Vert_{L^2(\Gamma,\pi;L^2(D))}$.
The variational form of~\eqref{eq:setup:darcy} then reads
\begin{align}
  \label{eq:setup:varriational_form}
  B(u,v) = F(v) \quad\mbox{for all }v\in\mathcal{V},
\end{align}
where $F(v)=\int_{\Gamma}\int_D f(x)v(x,y)\,\mathrm{d}x\,\mathrm{d}\pi(y)$ is supposed to be well defined for an appropriate Hilbert space $\mathcal{V}$.

A conforming FE space $\mathcal{X}_p(\mathcal{T}):=\operatorname{span}\{ \varphi_i \}_{j=1}^N$ is used as spatial discretization of the physical space $\mathcal{X}$.
In particular, we assume $\mathcal{T}$ to be a regular triangulation of the domain $D$ with edges $\mathcal{E}$ and consider for $p\in\mathbb{N}$ the standard conforming order-$p$ Lagrange elements such that $\mathcal{X}_p(\mathcal{T}) := P_p(\mathcal{T})\cap C(\bar{D})$, where $P_p(\mathcal{T})$ is the space of element-wise polynomials of order $p$.
For any element $T\in\mathcal{T}$ and edge $E\in\mathcal{E}$, let $h_T$ and $h_E$ denote the diameter of $T$ and $E$, respectively.
Define the normal jump of a function $w\in H^1(D;\mathbb{R}^2)$ over the edge $E=\bar{T_1}\cap \bar{T_2}$ by $\llbracket w \rrbracket_E = (w|_{T_1} - w|_{T_2})\cdot \nu_E$ for the edge normal vector $\nu_E=\nu_{T_1}=-\nu_{T_2}$ of $E$.
Since the direction of the normal $\nu_E$ depends on the enumeration of the neighbouring triangles, we assume an arbitrary but fixed choice of the sign of $\nu_E$ for each $E\in\mathcal{E}$.
This allows us to define the fully discrete approximation space by
\begin{align}
  \label{eq:setup:fully_discrete_space}
  \mathcal{V}_N := \mathcal{V}_N(\Lambda_d; \mathcal{T}, p)
  := \Bigl\{ v_N = \sum_{\mu\in\Lambda_d} v_{N,\mu} P_\mu \mbox{ with } v_{N,\mu}\in\mathcal{X}_p(\mathcal{T}) \mbox{ for all } \mu\in\Lambda_d\Bigr\}
  \subset \mathcal{V}.
\end{align}
As a consequence, the Galerkin projection $u_N\in\mathcal{V}_N$ of the solution $u$ of~\eqref{eq:setup:varriational_form} is determined uniquely by
\begin{align}
  \label{eq:setup:galerkin_system}
  B(u_N,v_N) = F(v_N)
\qquad\mbox{for all }v_N\in\mathcal{V}_N.
\end{align}
To describe the lognormal case $a=\kappa$, define the set of admissible parameters
\begin{align}
  \label{eq:setup:Gamma_kappa}
  \Gamma_\kappa := \{ y\in\mathbb{R}^{L}\colon \sum_{\ell=1}^{L} \Vert \gamma_\ell\Vert_{L^{\infty}(D)} \vert y_\ell \vert < \infty \},
\end{align}
which is necessary and sufficient to guarantee pointwise boundedness and positivity of $\kappa$~\cite{Hoang2014}.
Note that for $\sigma_\ell>0$ the probability density function for the univariate Gaussian distribution $\mathcal{N}(0,\sigma_\ell^2)$ can be written as 
\begin{align*}
  \pi_{\ell}(y_\ell;\sigma_\ell) = \zeta_{\ell}(y_\ell;\sigma_\ell) \frac{1}{\sqrt{2\pi}}\exp(-\frac{1}{2}y_\ell^2)
  \quad\mbox{with}\quad
  \zeta_{\ell}(y_{\ell};\sigma_\ell)
  := \frac{1}{\sigma_\ell} \exp\Bigl( \bigl(\frac{1}{2} -\frac{1}{2\sigma_\ell^2}\bigr) y_\ell^2 \Bigr).
\end{align*}
Moreover, for any $\rho\geq0$ let $\sigma_\ell(\rho) := \exp(\rho\Vert\gamma_\ell\Vert_{L^\infty(D)})$ and let 
\begin{align}
  \label{eq:setup:zeta_and_pi}
  \zeta_{\rho}(y) := \prod_{\ell=1}^{L} \zeta_{\ell}(y_{\ell};\sigma_\ell(\rho))
  \qquad\mbox{and}\qquad
  \pi_{\rho} := \prod_{\ell=1}^{L} \pi_{\ell}(y_\ell;\sigma_\ell(\rho)).
\end{align}
Note that $\rho=0$ implies $\zeta_{0}=1$ and thus $\pi_0$ denotes the standard Gaussian density.
We henceforth investigate the following two application cases.
\begin{enumerate}[label=(\Alph*)]
  \item\label{case:setup:affine}
    \textbf{affine case.}
    Consider the affine coefficient field $a(\bullet,y)=\gamma(\bullet,y)\in L^\infty(D)$ $\pi$-a.e.
    In this setting, we assume the univariate random variables $y_\ell$ to be i.i.d.\ uniformly distributed on $\Gamma_{\ell}=[-1,1]$, i.e.\ $\pi_\ell=1/2$.
    Consequently, we set $\mathcal{V}=L^2(\Gamma,\pi;\mathcal{X})$ and employ Legendre polynomials as basis functions.
    For notational convenience, we additionally set $\zeta_\rho(y)\equiv1$ and $\pi_\rho\equiv\pi=2^{-L}$ for any $\rho\in\mathbb{R}$ for the analysis later on.
  \item\label{case:setup:lognormal}
    \textbf{lognormal case.}
    Consider the exponential coefficient field $a(\bullet,y)=\kappa(\bullet,y)\in L^\infty(D)$ $\pi$-a.e.
    Since we assume the univariate random variables $y_\ell$ to follow an i.i.d.\ standard normal distribution, we refer to $\kappa$ as a lognormal coefficient field.
    For $\vartheta\in[0,1]$, $\rho>0$ and $\Gamma=\Gamma_\kappa$, the solution space is defined by
    \begin{align*}
      \mathcal{V} = \{ w\colon\Gamma\to\mathcal{X} \mbox{ measurable with }B(w,w)<\infty \},
    \end{align*}
    where $B$ is the bilinear form~\eqref{eq:setup:B} with $\pi=\pi_{\vartheta\rho}$ from~\eqref{eq:setup:zeta_and_pi}.
    For the polynomial basis we choose scaled Hermite polynomials $\{ H_\mu^{\vartheta\rho} \}_{\mu\in\mathcal{F}}$, see Supplement~\ref{supplement:normalization_and_triple_products} and~\cite{SchwabGittelson2011}.
\end{enumerate}

\section{Discretization of solution and coefficient}%
\label{sec:discretization}

This section describes the approximation of the discrete solution $u_N$ of~\eqref{eq:setup:galerkin_system}, the right-hand side $f$ and the coefficient fields $\gamma$ and $\kappa$ in the TT format.
A brief summary of notation and some fundamental properties of the TT format are given in Supplement~\ref{supplement:tt_format}.
A general and more detailed description of the TT format is given in~\cite{Oseledets2009,Oseledets2011,Holtz2012} and in e.g.~\cite{EigelPfeffer2016,Bachmayr2016,DolgovScheichl2019alsCross,EigelMarschall2020lognormal} TT representations have been applied to the elliptic model problem~\eqref{eq:setup:darcy}.

\subsection{TT approximation of the solution}%

In the following we recall the notion of nonlinear least-squares approximation and show that a sample-based quasi-best approximation of $u$ can be obtained with high probability given sufficiently many samples.
For this, recall that the discrete solution $u_N$ of~\eqref{eq:setup:galerkin_system} satisfies the Galerkin orthogonality property $B(u_N-u, v_N) = 0$ for all $v_N\in\mathcal{V}_N$, which implies that $\Vert u-v_N \Vert_{B}^2 = \Vert u-u_N \Vert_{B}^2 + \Vert u_N-v_N \Vert_{B}^2$ for any $v_N\in\mathcal{V}_N$.
This means that $u_N$ is the $\Vert\bullet\Vert_{B}$-best approximation to the solution $u$ in $\mathcal{V}_N$.
Extending this idea to the subset $\mathcal{M}_r = \{w_N\in\mathcal{V}_N \colon \operatorname{tt-rank}(\discretized{w}) \le r\}$
leads to the best approximation problem
\begin{equation*}
    \argmin_{w_N\in\mathcal{M}_r} \Vert u-w_N \Vert_{B}^2
    = \argmin_{\substack{w_N\in\mathcal{M}_r}} \Vert u_N-w_N \Vert_{B}^2 .
\end{equation*}
Minimizing the energy norm is straight-forward for the affine case~\ref{case:setup:affine}, since $\mathcal{V}= L^2(\Gamma,\pi;\mathcal{X})$.
For the lognormal case~\ref{case:setup:lognormal} however, minimizing the $\Vert\bullet\Vert_{B}$-norm introduces an additional dependence on the diffusion coefficient.
Even though this is not problematic from a theoretical point of few, it is possible to eliminate this dependence by employing the boundedness of the bilinear form $B$~\eqref{eq:supplement:boundedness_B}, i.e.
\begin{align*}
  \Vert u_N - w_N \Vert_{B} 
  \leq \sqrt{\hat{c}(\vartheta\rho)} \Vert u_N - w_N \Vert_{L^2(\Gamma,\pi_{\rho};\mathcal{X})}.
\end{align*}
We then aim to find the minimum
\begin{equation}
    \label{eq:discretization:min}
    u_{N,r}
    := \argmin_{w_N\in\mathcal{M}_r} \Vert u_N-w_N \Vert_{L^2(\Gamma,\pi_{\rho};\mathcal{X})}^2,
\end{equation}
which has the advantage of being independent of the choice of $\vartheta$.
Since computing the $L^2$-norm with respect to $\pi_\rho$ is infeasible in practice, we follow the ideas of~\cite{EigelNeumann2019,EigelTrunschke2019vmc} and replace the high-dimensional integral with the Monte Carlo estimate
\begin{equation}
    \label{eq:discretization:empirical_norm}
    \Vert v\Vert_n := \sqrt{\frac{1}{n}\sum_{i=1}^n \Vert v(y^{(i)})\Vert_{\mathcal{X}}^2}
\end{equation}
for any $v\in\mathcal{V}$, where the samples $y^{(i)}\sim\pi_\rho$ are independent for all $i=1,\ldots,n$.
The computation of the best approximation then reads
\begin{equation}
    \label{eq:discretization:empirical_min}
    u_{N,r,n} := \argmin_{w_N\in\mathcal{M}_r} \Vert u_N-w_N \Vert_n^2,
\end{equation}
where the $u_N(y^{(i)})$ can be computed with an arbitrary FE solver.
The resulting \emph{nonlinear least squares} problem~\eqref{eq:discretization:empirical_min} is easy to implement and many highly optimized frameworks exist to solve this problem~\cite{EigelNeumann2019}.
Note that this is a fully non-intrusive approach since it only requires the (pointwise) standard FE solutions $u_N(y^{(i)})$ of the deterministic problem~\eqref{eq:setup:darcy}.
Theorem~2.12 in \cite{EigelTrunschke2020} ensures that the minimizer $u_{N,r,n}$ of~\eqref{eq:discretization:empirical_min} is comparable to the best-approximation $u_{N,r}$ of~\eqref{eq:discretization:min} given a sufficiently large number of samples $n$.
Moreover, in~\cite{EigelTrunschke2020} the authors derive a qualitative lower bound for the required number of samples $n$, which guarantees that $u_{N,r,n}$ is a quasi-best approximation on $\mathcal{M}_r$ with high probability.

\begin{remark}
  Strictly speaking~\cite[Theorem 2.12]{EigelTrunschke2020} holds only for the affine case~\ref{case:setup:affine} if we assume $y^{(i)}\sim\pi_\rho$.
  For the lognormal case~\ref{case:setup:lognormal}, an adapted sampling density (cf.~\cite{CohenMigliorati2017,EigelTrunschke2020}) is required.
  Nevertheless, in practice we do not observe that this is necessary.
  Moreover, we note that the sampling bound for $n$ established in~\cite{EigelTrunschke2020} is a worst-case bound and that a significantly smaller number of samples suffices in our experiments.
\end{remark}

\subsection{TT representation of the diffusion coefficient}%
\label{sec:discretization:diff_coeff}

\subsubsection*{Affine coefficient field and right-hand side}

The constant right-hand side $f$ and the affine coefficient field $\gamma$ are both given by an expansion into polynomials with at most one active mode each, i.e., each of the expansion terms is a univariate polynomial.
Such functions possess a natural (exact) representation of their coefficient tensor in the TT format.
For $d\in\mathbb{N}^L$, consider the set of univariate $L$-dimensional indices
\begin{align*}
  \Delta_d := \bigcup_{\ell=1}^L \{ j\, e_\ell \colon j\in[d_\ell] \}
  \qquad\mbox{with}\qquad \vert\Delta_d\vert = 1 + \sum_{\ell=1}^{L} (d_\ell-1),
\end{align*}
where $e_\ell$ denotes the unit vector $(e_\ell)_j = \delta_{\ell j}$ for $j\in\mathbb{N}$, and let $\iota\colon \{ 0,\dots,\vert\Delta_d\vert-1 \}\to\Delta_d$ with $\iota(0)=(0,\dots,0)$ be an arbitrary enumeration of $\Delta_d$.
Furthermore, let $\iota_{\mu_\ell}=\iota^{-1}(\mu_\ell e_\ell)$ be the enumeration index of $\mu_\ell e_\ell$ and define for $k\in\mathbb{N}_0$
\begin{align*}
  \delta(k,\mu_\ell) := \delta_{k 0}\delta_{\mu_\ell 0} + (1-\delta_{k 0})\bigl( \delta_{\mu_\ell 0} + \delta_{k\iota_{\mu_\ell}} (1-\delta_{\mu_\ell 0}) \bigr).
\end{align*}
Any function $w_N\in\mathcal{V}_N(\Delta_d;\mathcal{T},p)$ can formally be written as an expansion with respect to the full tensor set $\Lambda_d$,
\begin{align}
  \label{eq:discretization:univariate_function}
  w_N(x,y)
  = \sum_{t=0}^{\vert\Delta_d\vert-1} w_{\iota(t)}(x) P_{\iota(t)}(y)
  = \sum_{j\in[\vert\mathcal{T}\vert]} \sum_{\mu\in\Lambda_d} \discretized{w}[j,\mu] \varphi_j(x) P_\mu(y),
\end{align}
where $\discretized{w}[j,\mu]=0$ for $\mu\not\in\Delta_d$.
The following proposition shows that the coefficient tensor $\discretized{w}$ of~\eqref{eq:discretization:univariate_function} has an exact representation in the TT format.

\begin{proposition}%
  \label{pro:discretization:exactTT_gamma}
  For any $w_N\in\mathcal{V}_N(\Delta_d;\mathcal{T},p)$ the coefficient tensor $\discretized{w}\in\mathbb{R}^{N\times d}$ has an exact representation in the TT format.
  This representation is given by
  \begin{align*}
    \discretized{w}[j,\mu]
    = \sum_{k_1=0}^{\vert\Delta_d\vert-1} \dots \sum_{k_M=0}^{\vert\Delta_d\vert-1} \discretized{w}_0[j,k_1]\prod_{m=1}^M \discretized{w}_m[k_m,\mu_m,k_{m+1}],
  \end{align*}
  with spatial component tensor $\discretized{w}_0\in\mathbb{R}^{N\times\vert\Delta_d\vert}$ given by the FE coefficients of the functions $w_{\iota(t)}(x)$, i.e.
  \begin{align*}
    w_{\iota(t)}(x) := \sum_{j\in[\vert\mathcal{T}\vert]}\discretized{w}_0[j,t]\varphi_j(x)
    \qquad\mbox{for all }t=0,\dots,\vert\Delta_d\vert-1,
  \end{align*}
  and stochastic cores $\discretized{w}_\ell\in\mathbb{R}^{\vert\Delta_d\vert\times d_\ell \times\vert\Delta_d\vert}$ for $\ell=1,\dots,L-1$ and $\discretized{w}_L\in\mathbb{R}^{\vert\Delta_d\vert\times d_L}$ given by
  \begin{align*}
    \discretized{w}_\ell[k_\ell,\mu_\ell,k_{\ell+1}] := \delta_{k_\ell k_{\ell+1}}\delta(k_\ell,\mu_\ell)
    \qquad\mbox{and}\qquad
    \discretized{w}_L[k_L,\mu_L] := \delta(k_L,\mu_L).
  \end{align*}
\end{proposition}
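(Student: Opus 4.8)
The plan is to verify the stated factorization by a direct computation: first read off the coefficient tensor $\discretized{w}$, then contract the proposed cores, and check that the two agree.

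To pin down $\discretized{w}$, note that the family $\{\varphi_j P_\mu\}_{j,\,\mu\in\Lambda_d}$ is linearly independent in $\mathcal{V}_N(\Lambda_d;\mathcal{T},p)$, so comparing the two representations of $w_N$ in~\eqref{eq:discretization:univariate_function} and expanding every coefficient function $w_{\iota(t)}$ in the nodal basis gives
\begin{align*}
  \discretized{w}[j,\mu]
  = \sum_{t=0}^{\vert\Delta_d\vert-1}\discretized{w}_0[j,t]\,\delta_{\iota(t),\mu}
  = \begin{cases} \discretized{w}_0[j,\iota^{-1}(\mu)], & \mu\in\Delta_d,\\ 0, & \mu\notin\Delta_d,\end{cases}
\end{align*}
using that $\iota$ is a bijection onto $\Delta_d$ and that $\Delta_d$ is exactly the set of multi-indices $\mu$ with $\vert\operatorname{supp}\mu\vert\le1$ and $\ell$-th entry in $[d_\ell]$. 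This is the identity the right-hand side of the proposition must reproduce.

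Next I would contract the proposed cores. The Kronecker factors $\delta_{k_\ell k_{\ell+1}}$ in the intermediate cores $\discretized{w}_\ell$, $\ell=1,\dots,L-1$, collapse all internal rank indices to a single index $k=k_1=\dots=k_L$, so the TT contraction reduces to
\begin{align*}
  \sum_{k=0}^{\vert\Delta_d\vert-1}\discretized{w}_0[j,k]\prod_{\ell=1}^{L}\delta(k,\mu_\ell).
\end{align*}
It then remains to prove the purely combinatorial identity that $\prod_{\ell=1}^{L}\delta(k,\mu_\ell)$ equals $1$ if $\mu=\iota(k)$ and $0$ otherwise, i.e.\ that the product of the scalar weights $\delta(k,\cdot)$ over the $L$ parameter directions acts as the indicator of the single multi-index $\iota(k)$. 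Granting this, the contraction collapses to $\discretized{w}_0[j,\iota^{-1}(\mu)]$ for $\mu\in\Delta_d$ and to $0$ otherwise, which is exactly $\discretized{w}[j,\mu]$; the rank bound $\operatorname{tt-rank}(\discretized{w})\le(\vert\Delta_d\vert,\dots,\vert\Delta_d\vert)$ is then immediate from the declared core formats.

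The combinatorial identity is the only nontrivial step, and I expect it to be the main obstacle. It is checked by unfolding the definition of $\delta(k,\mu_\ell)$ along the case distinctions $k=0$ versus $k\neq0$ and $\mu_\ell=0$ versus $\mu_\ell\neq0$, and by invoking injectivity of $\iota$, so that $\delta_{k,\iota_{\mu_\ell}}=1$ precisely when $\mu_\ell e_\ell=\iota(k)$, which fixes both the active coordinate $\ell$ and the value $\mu_\ell$ at once. The subtle case is the constant mode $\iota(0)=(0,\dots,0)$, where several factors $\delta(k,0)$ are active simultaneously: one must verify both that no multi-index $\mu$ with $\vert\operatorname{supp}\mu\vert\ge2$ survives the product and that the $k$-th summand contributes only to the entry $\mu=\iota(k)$, so that the diagonal bookkeeping across the cores produces no spurious cross terms between distinct enumerated indices.
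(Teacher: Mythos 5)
Your overall strategy is the same as the paper's: the Kronecker factors $\delta_{k_\ell k_{\ell+1}}$ collapse the internal ranks to a single diagonal index $k$, and everything reduces to showing that $\prod_{\ell=1}^L\delta(k,\mu_\ell)$ is the indicator of $\mu=\iota(k)$. You correctly single out the constant mode as the delicate case, but you assert rather than verify the identity there, and that is exactly where it breaks: for $k\neq 0$ and $\mu_\ell=0$ the definition gives $\delta(k,0)=\delta_{\mu_\ell 0}+\delta_{k\iota_{\mu_\ell}}(1-\delta_{\mu_\ell 0})=1$, independently of whether $\ell$ is the direction on which $\iota(k)$ is supported. Hence $\prod_{\ell}\delta(k,0)=1$ for \emph{every} $k$, not only for $k=0$, so your claimed identity $\prod_{\ell}\delta(k,\mu_\ell)=\delta_{\mu,\iota(k)}$ fails at $\mu=0$, and the contraction evaluated at the constant multi-index returns $\sum_{k}\discretized{w}_0[j,k]$ instead of the required $\discretized{w}_0[j,0]$. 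The other two cases ($\vert\operatorname{supp}\mu\vert\ge 2$ and $\mu\in\Delta_d\setminus\{0\}$) do go through by the injectivity argument you sketch, but the $\mu=0$ case is a genuine gap, not a formality.

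For comparison, the paper's own proof has the same blind spot: its intermediate formula for $\discretized{T}[k_1,\mu]$ evaluated at $\mu=0$ equals $1$ for every $k_1$, which contradicts the asserted $\discretized{T}[k_1,0]=\delta_{k_1 0}$, so you are reproducing rather than repairing the weak point. The construction (and your combinatorial lemma) can be fixed by defining the $k$-th diagonal slice of the $\ell$-th core as $\delta_{\mu_\ell,\iota(k)_\ell}$, i.e.\ by forcing the factor in the \emph{active} direction of $\iota(k)$ to vanish at $\mu_\ell=0$ when $k\neq 0$; with that amendment the product really is the indicator of $\mu=\iota(k)$ and the rest of your argument (the identification of $\discretized{w}[j,\mu]$ with $\discretized{w}_0[j,\iota^{-1}(\mu)]$ on $\Delta_d$ and zero elsewhere, and the rank bound) closes. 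As written, however, the one step you defer as ``checked by unfolding the definition'' does not survive the unfolding.
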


\begin{proof}%
  Contracting the last two cores $\discretized{w}_{L-1}$ and $\discretized{w}_L$ leads to
  \begin{equation*}
      \begin{split}
          &\sum_{k_L=0}^{\vert\Delta_d\vert-1} \discretized{w}_{L-1}[k_{L-1},\mu_{L-1},k_L]\discretized{w}_{L}[k_{L},\mu_{L}]\\
          &\qquad= \delta(k_{L-1},\mu_{L-1})\delta(k_{L-1},\mu_{L})\\
          &\qquad= \delta_{k_{L-1}0}^2\!\!\prod_{\ell=L-1}^{L}\!\! \delta_{\mu_{\ell}0} \ +\ (1-\delta_{k_{L-1}0})^2 \!\!\prod_{\ell=L-1}^{L}\!\! (\delta_{\mu_{\ell}0} + \delta_{k_{L-1}\iota_{\mu_{\ell}}}(1-\delta_{\mu_{\ell}0})).
      \end{split}
  \end{equation*}
  Iterating the contraction for the remaining stochastic cores $\discretized{w}_{\ell}$, $\ell=L-1,\dots,1$, yields the tensor $\discretized{T}\in\mathbb{R}^{\vert\Delta_d\vert\times d}$ given by
  \begin{align}
      \label{eq:supplement:exactTT_gamma:proof}
      \discretized{T}[k_1,\mu]
      = \delta_{k_1 0}^L\delta_{\mu 0} + (1-\delta_{k_1 0})^L\prod_{\ell=1}^{L} \Bigl( \delta_{\mu_{\ell} 0} + \delta_{k_{1}\iota_{\mu_{\ell}}} (1-\delta_{\mu_{\ell} 0}) \Bigr).
  \end{align}
  Since $\Delta_d$ is the set of univariate $L$-dimensional indices, for each $\mu\in\Lambda_d$ we have $\mu=\sum_{\ell=1}^{L} \mu_\ell e_\ell$ with $\mu_\ell e_\ell\in\Delta_d$.
  For any $\mu\in\Lambda_d\setminus\Delta_d$ there exist at least two $\ell_1\neq \ell_2 \in\{ 1,\dots,L \}$ with $\mu_{\ell_1},\mu_{\ell_2}>0$.
  Since $\iota^{-1}(\mu_{\ell_1}e_{\ell_1})\neq\iota^{-1}(\mu_{\ell_2}e_{\ell_2})$ and $0\in\Delta_d$, i.e.\ $\delta_{\mu 0}=0$, it follows that
  \begin{align*}
      \discretized{T}[k_1,\mu] 
      = \delta_{k_{1}\iota_{\mu_{\ell_1}}}\delta_{k_{1}\iota_{\mu_{\ell_2}}}(1-\delta_{k_1 0})^L\prod_{\ell_1,\ell_2\neq \ell=1}^{L} \Bigl( \delta_{\mu_{\ell} 0}(1-\delta_{k_{1}\iota_{\mu_{\ell}}}) + \delta_{k_{1}\iota_{\mu_{\ell}}} \Bigr)
      = 0.
  \end{align*}
  For any $\mu\in\Delta_d\setminus\{ 0 \}$ there exists exactly one $\ell\in\{ 1,\dots,L \}$ such that $\mu=\mu_\ell e_\ell$ for $\mu_\ell\in[1\sep d_\ell]$.
  Hence, $\delta_{\mu_m 0}=1$ for all $m=\{ 1,\dots,L \}$ with $m\neq \ell$ and~\eqref{eq:supplement:exactTT_gamma:proof} simplifies to $\discretized{T}[k_1,\mu] = (1-\delta_{k_1 0})\delta_{k_1\iota^{-1}(\mu)}$.
  Eventually, for $\mu=0$ we have $\iota_{\mu_\ell} = 0$ for all $\ell=1,\dots,L$ and thus $\discretized{T}[k_1,0] = \delta_{k_1 0} \delta_{k_1 \iota^{-1}(0)}$.
  Combining the cases above results in $\discretized{T}[k_1,\mu] = \delta_{k_1\iota^{-1}(\mu)}$ for all $\mu\in\Delta_d$.
  Combining this and the definition of $\discretized{w}_0$ with~\eqref{eq:discretization:univariate_function} concludes the proof.
\end{proof}

\subsubsection*{Lognormal coefficient field}

There exists no exact TT representation of the lognormal diffusion coefficient~\eqref{eq:setup:kappa}.
Several methods to obtain an approximation of~\eqref{eq:setup:kappa} in the TT format have been investigated~\cite{EMPS20,EHLMW14,DKLM15,DS19,EFHT21}.
Since Proposition~\ref{pro:discretization:exactTT_gamma} yields an exact representation of the affine exponent, the method of our choice is to compute the exponential of $\gamma-\gamma_0$ as proposed in~\cite{EFHT21}.
In this work, the authors use that the lognormal diffusion coefficient~\eqref{eq:setup:kappa} constitutes a holonomic function, i.e., $\kappa$ is the unique solution of the gradient system
\begin{equation}
  \label{eq:discretization:gradient_system}
  \begin{aligned}
    \grad_y \kappa(x,y) &:= \kappa(x,y)\grad_y (\gamma(x,y) - \gamma_0(x)), \\
    \kappa(x,y_0) &:= \exp(\gamma(x,y_0) - \gamma_0(x)),
  \end{aligned}
\end{equation}
for some arbitrary $y_0\in\Gamma_\kappa$ and all $x\in D$.
They show that a Galerkin approach for~\eqref{eq:discretization:gradient_system} emits a unique solution, which can be approximated efficiently in the TT format using the \emph{Alternating Linear Scheme} (ALS)~\cite{HRS12}.
Additionally, this approach yields a reliable and efficient error bound for the energy error induced by~\eqref{eq:discretization:gradient_system}, which does not involve any computational overhead.

\begin{remark}
  We utilize the structure of $\gamma$, $\kappa$ and $f$ to generate efficient low-rank approximations.
  However, it should be pointed out that there is no restriction per se and the used approximation technique can readily be applied to more complicated problems or alternative approximation techniques can be used as a substitute without further adaptation of other parts of our approach.
  As an example, one could consider non-intrusive reconstruction techniques such as a TT cross approximation~\cite{Oseledets2010} or a VMC reconstruction if the diffusion field is only accessible by pointwise evaluations.
\end{remark}

\section{Error estimation}%
\label{sec:estimator}

In this section we recall the residual based error estimator presented in~\cite{EigelMarschall2020lognormal}, which is an adaptation of the development in~\cite{EigelGittelson2014asgfem,EigelGittelson2014convergence}.
Additionally, we motivate an heuristic indicator to steer the number of regression samples in the adaptive algorithm.
The results are stated with the lognormal case~\ref{case:setup:lognormal} in mind, but equally hold true without any adaptation for the simpler affine coefficient, using the notation described in case~\ref{case:setup:affine}.

\subsection{Residual based error estimator}

We note that there exists no exact TT representation of the lognormal diffusion coefficient, thus the approximation of $\kappa$ described in Section~\ref{sec:discretization:diff_coeff} introduces an additional error.
Since this error can be controlled independently, we assume the approximation of $\kappa$ to be sufficiently accurate such that it can be neglected henceforth.

In the following we assume some fixed FE polynomial degree $p\in\mathbb{N}$ and consider $M \leq L \in \mathbb{N}$.
Furthermore, let $d,q\in\mathcal{F}$ with $\operatorname{supp}(d) = \{ 1,\dots,M \} \subseteq \{ 1,\dots,L \} = \operatorname{supp}(q)$.
Assume $w_N\in \mathcal{V}_N=\mathcal{V}_N(\Lambda_d;\mathcal{T},p)$ is given by the TT representation
\begin{align*}
    w_N(x,y) := \sum_{j\in[N]}  \sum_{\mu\in\Lambda_d} \Bigl( \sum_{k=1}^{r} \discretized{w}_0[j,k_1] \prod_{m=1}^{M}\discretized{w}_m[k_m,\mu_m,k_{m+1}]\Bigr) \varphi_j(x) P_\mu(y)
\end{align*}
with ranks $r\in\mathbb{N}^M$.
Similarly, assume that the coefficient field $a_N\in L^2(\Gamma,\pi;L^{\infty}(D))$ is given in a semi-discretized form
\begin{align*}
  a_N(x,y) := \sum_{\nu\in\Lambda_q} \Bigl( \sum_{k=1}^{s} a_0[k_1](x) \prod_{\ell=1}^{L}\discretized{a}_\ell[k_\ell,\nu_\ell,k_{\ell+1}]\Bigr) P_\nu(y)
\end{align*}
with ranks $s\in\mathbb{N}^L$ and $a_0[k_1]\in\mathcal{X}$ for all $k_1=1,\dots,s_1$.
Define the residual $\mathcal{R}(v)\in \mathcal{V}^* = L^2(\Gamma,\pi;\mathcal{X}^*)$ of~\eqref{eq:setup:varriational_form} by $\mathcal{R}(v) := F - B(v,\bullet)$.
The energy error can then be bounded in the following way.

\begin{theorem}[{{\cite[Theorem $5.1$]{EGSZ14}}}]%
  \label{thm:estimator:EGSZ14}
  Let $\mathcal{V}_N\subset\mathcal{V}$ be a closed subspace and $w_N\in\mathcal{V}_N$ arbitrary.
  Let $u_N$ denote the Galerkin projection with respect to $B$ of $u$ onto $\mathcal{V}_N$.
  It then holds that
  \begin{equation*}
    \Vert u-w_N\Vert_B^2 
    \leq \left( \sup_{v\in\mathcal{V}\setminus\{0\}} \frac{\vert \langle \mathcal{R}(w_N), (1-I_{\mathrm{C}}) v\rangle_{\mathcal{V}^*,\mathcal{V}}\vert}{\check{c}\,\Vert \grad v\Vert_{\pi_0,D}} + c_I \Vert u_N - w_N\Vert_B \right)^2 + \Vert u_N - w_N\Vert_B^2.
  \end{equation*}
  Here, $I_\mathrm{C}$ denotes the tensor product interpolation operator defined in~\cite{EigelMarschall2020lognormal}, $c_I$ is the operator norm of $1-I_{\mathrm{C}}$ with respect to $\Vert\bullet\Vert_B$ and $\check{c}$ is the coercivity constant of the bilinear form $B$.
\end{theorem}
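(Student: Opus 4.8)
The bound is the standard residual–based a posteriori estimate for a non-Galerkin approximant $w_N$, decomposed via the Galerkin projection $u_N$, and combined with the Céa-type (Pythagoras) orthogonality that already appears in the excerpt. The plan is to start from the Galerkin orthogonality $B(u - u_N, v_N) = 0$ for all $v_N \in \mathcal{V}_N$, which gives the Pythagoras identity $\Vert u - w_N\Vert_B^2 = \Vert u - u_N\Vert_B^2 + \Vert u_N - w_N\Vert_B^2$. So the whole game reduces to bounding $\Vert u - u_N\Vert_B$ by the residual term plus $c_I\Vert u_N - w_N\Vert_B$, after which squaring and adding back $\Vert u_N - w_N\Vert_B^2$ yields the claimed inequality.

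First I would write $\Vert u - u_N\Vert_B = \sup_{v \in \mathcal{V}\setminus\{0\}} B(u - u_N, v)/\Vert v\Vert_B$ by coercivity/boundedness (using that $B$ is symmetric positive definite so the energy norm is a genuine norm and the Riesz representation applies). Then for any fixed $v$ I would insert the interpolation operator $I_{\mathrm{C}}$ from~\cite{EigelMarschall2020lognormal}: write $v = (1 - I_{\mathrm{C}})v + I_{\mathrm{C}}v$, with $I_{\mathrm{C}}v \in \mathcal{V}_N$. For the $I_{\mathrm{C}}v$ piece, Galerkin orthogonality of $u_N$ kills $B(u - u_N, I_{\mathrm{C}}v)$, so $B(u - u_N, I_{\mathrm{C}}v) = B(w_N - u_N, I_{\mathrm{C}}v)$, hence $B(u - u_N, v) = B(u - u_N, (1-I_{\mathrm{C}})v) + B(w_N - u_N, I_{\mathrm{C}}v)$. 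Wait — to make the residual appear I should instead expand $B(u - u_N, (1-I_{\mathrm{C}})v)$ as $B(u - w_N, (1-I_{\mathrm{C}})v) + B(w_N - u_N, (1-I_{\mathrm{C}})v)$, recognizing $B(u - w_N, (1-I_{\mathrm{C}})v) = \langle \mathcal{R}(w_N), (1-I_{\mathrm{C}})v\rangle$ since $\mathcal{R}(w_N) = F - B(w_N, \bullet) = B(u - w_N, \bullet)$. Collecting, $B(u - u_N, v) = \langle \mathcal{R}(w_N), (1-I_{\mathrm{C}})v\rangle + B(w_N - u_N, v)$, and the second term is bounded by $\Vert w_N - u_N\Vert_B \Vert v\Vert_B$ by Cauchy–Schwarz in the energy inner product.

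Next I would estimate the residual term: $\langle \mathcal{R}(w_N), (1-I_{\mathrm{C}})v\rangle \le \Vert\mathcal{R}(w_N)\Vert_{*}\,\Vert(1-I_{\mathrm{C}})v\Vert$, but the statement measures $(1-I_{\mathrm{C}})v$ in the $\Vert\grad\bullet\Vert_{\pi_0,D}$-norm rather than $\Vert\bullet\Vert_B$, which is why the coercivity constant $\check c$ enters: $\Vert\grad v\Vert_{\pi_0,D} \le \check c^{-1/2}\Vert v\Vert_B$ (or the appropriate version), giving the $1/\check c$ factor after taking $v/\Vert\grad v\Vert_{\pi_0,D}$ — more precisely I would factor $\Vert v\Vert_B$ out and convert, arriving at the supremum $\sup_v |\langle\mathcal{R}(w_N),(1-I_{\mathrm{C}})v\rangle|/(\check c\,\Vert\grad v\Vert_{\pi_0,D})$ as written; the $I_{\mathrm{C}}v$-dependent term $\Vert w_N - u_N\Vert_B$ picks up the constant $c_I$ because one controls $\Vert(1-I_{\mathrm{C}})v\Vert_B \le c_I\Vert v\Vert_B$ where needed, or rather the bound on the $I_{\mathrm{C}}$-part of the decomposition is absorbed into $c_I\Vert u_N - w_N\Vert_B$. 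Dividing by $\Vert v\Vert_B$, taking the sup, and using the triangle inequality gives $\Vert u - u_N\Vert_B \le (\text{sup term}) + c_I\Vert u_N - w_N\Vert_B$. Substituting into the Pythagoras identity finishes it.

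The main obstacle is bookkeeping the constants correctly: disentangling which pieces of the decomposition $v = (1-I_{\mathrm{C}})v + I_{\mathrm{C}}v$ are measured in $\Vert\bullet\Vert_B$ versus $\Vert\grad\bullet\Vert_{\pi_0,D}$, so that exactly the factor $1/\check c$ lands on the residual term and exactly $c_I$ (the $\Vert\bullet\Vert_B$-operator norm of $1-I_{\mathrm{C}}$) lands on the $\Vert u_N - w_N\Vert_B$ term — while also making sure the quantity inside the first square is nonnegative so that $(a+b)^2$ is a legitimate upper bound after the two terms are combined. Since the statement is quoted verbatim from~\cite[Theorem 5.1]{EGSZ14}, I would ultimately just cite that reference and sketch the above as the argument, rather than reproduce every constant-chase; the interpolation estimates for $I_{\mathrm{C}}$ are themselves taken from~\cite{EigelMarschall2020lognormal}.
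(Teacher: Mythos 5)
Your proposal is the standard argument behind \cite[Theorem~5.1]{EGSZ14}; the paper itself gives no proof and simply imports the result, so your route (Galerkin orthogonality $\Rightarrow$ Pythagoras, then dualize $\Vert u-u_N\Vert_B$, insert $I_{\mathrm{C}}$, use $I_{\mathrm{C}}v\in\mathcal{V}_N$ to kill one term, identify the residual, and convert norms via coercivity) is essentially the same as the cited source's. One slip to fix: the collected identity should read $B(u-u_N,v)=\langle\mathcal{R}(w_N),(1-I_{\mathrm{C}})v\rangle + B(w_N-u_N,(1-I_{\mathrm{C}})v)$, i.e.\ the second term is also tested against $(1-I_{\mathrm{C}})v$ rather than $v$ --- this is precisely why Cauchy--Schwarz produces the factor $c_I=\Vert 1-I_{\mathrm{C}}\Vert_B$ on $\Vert u_N-w_N\Vert_B$, which you only recover afterwards by a somewhat hand-wavy absorption argument.
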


We point out, that showing coercivity of the bilinear form for the lognormal case~\ref{case:setup:lognormal} is not trivial, see Supplement~\ref{supplement:well_posedness_lognormal} for more details and references.
Since we consider $w_N\in\mathcal{V}_N$ as well as a finite expansion $a_N$ of the diffusion coefficient $a$, the residual $\mathcal{R}(w_N)\in\mathcal{V}_N^*:=L^2(\Gamma,\pi;\mathcal{X}_N^*)$ is itself characterized by a finite polynomial expansion.
In particular, we have 
\begin{equation*}
  \langle\mathcal{R}(w_N),v_N\rangle_{\mathcal{V}_N^*, \mathcal{V}_N} = \int_\Gamma \int_D f v_N - r(w_N)\cdot\grad v_N\,\mathrm{d}x\mathrm{d}\pi(y)
\end{equation*}
for $r(w_N) := a_N\grad w_N$.
The semi-discrete expansion
\begin{align}
  \label{eq:estimator:expansion_r}
  r(w_N)(x,y)
  = \sum_{\mu\in\Lambda_{d+q-1}} r_\mu(w_N)(x) P_\mu(y),
\end{align}
with $r_\mu(w_N)$ derived from the TT representations of $a_N$ and $w_N$, allows to split the residual into an active and an inactive part.
With the inactive set $\Delta := \Lambda_{(d+q-1)}\setminus \ \Lambda_{d}$, consider the splitting $\mathcal{R}(w_N) = \mathcal{R}_{\Lambda_d}(w_N) + \mathcal{R}_{\Delta}(w_N)$ for
\begin{align*}
  \mathcal{R}_{\Lambda_d}(w_N) := f + \sum_{\mu\in\Lambda_d} \ddiv(r_{\mu}(w_N))P_\mu
  \qquad\mbox{and}\qquad
  \mathcal{R}_{\Delta}(w_N) := \sum_{\mu\in\Delta} \ddiv(r_{\mu}(w_N))P_\mu.
\end{align*}

\paragraph*{Deterministic estimator contributions}

The active part of the residual is associated with the deterministic approximation error of the FE discretization.
This enables to estimate the error on each triangle for the active set.
Define for any $w_N\in\mathcal{V}_N$ the deterministic error estimator contribution
\begin{align}
  \label{eq:estimator:eta_det}
  \eta_\mathrm{det}(w_N,\mathcal{T},\Lambda_d)^2
  := \sum_{T\in\mathcal{T}} \eta_{\mathrm{det},T}(w_N,\Lambda_d)^2 + \sum_{E\in\mathcal{E}} \eta_{\mathrm{det},E}(w_N,\Lambda_d)^2,
\end{align}
where the volume and edge terms respectively read 
\begin{align}
  \label{eq:estimator:eta_det:vol}
  \eta_{\mathrm{det},T}(w_N,\Lambda_d) &:= h_T \Vert \mathcal{R}_{\Lambda_d}(w_N)\,\zeta_{\vartheta\rho}\Vert_{\pi_0,T} &\mbox{for all }T\in\mathcal{T},\\
  \label{eq:estimator:eta_det:jump}
  \eta_{\mathrm{det},E}(w_N,\Lambda_d) &:= h_E^{1/2} \Vert \sum_{\mu\in\Lambda_d}\llbracket r_\mu(w_N) \rrbracket_E P_\mu\,\zeta_{\vartheta\rho}\Vert_{\pi_0,E} &\mbox{for all }E\in\mathcal{E}.
\end{align}
The deterministic estimator contribution bounds the active part of the residual as the following lemma shows.

\begin{lemma}[{{\cite[Proposition $5.3$]{EigelMarschall2020lognormal}}}]%
  \label{lem:estimator:est_det}
  For any $v\in\mathcal{V}$ and any $w_N\in\mathcal{V}_N$, it holds that
  \begin{align*}
    \vert \langle \mathcal{R}_{\Lambda_d}(w_N), (1-I_{\mathrm{C}})v\rangle_{\mathcal{V}^*,\mathcal{V}}\vert
    \leq c_\mathrm{det} \eta_\mathrm{det}(w_N,\mathcal{T},\Lambda_d) \Vert \grad v\Vert_{\pi_0,D}.
  \end{align*}
\end{lemma}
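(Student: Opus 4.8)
The plan is to follow the classical route for residual-based a~posteriori estimates, adapted to the tensor-product stochastic setting: rewrite the duality pairing as an integral over $D\times\Gamma$, integrate by parts element by element to expose interior volume residuals and interelement normal jumps, and then absorb the factor $(1-I_{\mathrm{C}})v$ using local stability and approximation properties of the quasi-interpolation operator $I_{\mathrm{C}}$ together with the Cauchy--Schwarz inequality. Throughout I would work pathwise in $y$ and integrate in $y$ only at the end.

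First I would exploit that $\mathcal{R}_{\Lambda_d}(w_N)=f+\sum_{\mu\in\Lambda_d}\ddiv(r_\mu(w_N))P_\mu$ involves only the stochastic modes in $\Lambda_d$, so that the pairing is unchanged if $(1-I_{\mathrm{C}})v$ is replaced by its stochastic component on $\operatorname{span}\{P_\mu:\mu\in\Lambda_d\}\otimes\mathcal{X}$; here the tensor-product structure of $I_{\mathrm{C}}$ (spatial quasi-interpolation in the active modes, truncation in the rest) enters and reduces the task to a spatial interpolation estimate applied to a function $\tilde v$ whose weighted $H_0^1$-energy is dominated by that of $v$. I would then rewrite $\langle\mathcal{R}_{\Lambda_d}(w_N),(1-I_{\mathrm{C}})v\rangle_{\mathcal{V}^*,\mathcal{V}}$ as $\int_\Gamma\int_D(\cdots)\,\mathrm dx\,\mathrm d\pi$ and pass to the reference Gaussian via $\mathrm d\pi=\zeta_{\vartheta\rho}\,\mathrm d\pi_0$ (with $\zeta_{\vartheta\rho}\equiv1$ in the affine case~\ref{case:setup:affine}), which moves the weight $\zeta_{\vartheta\rho}$ onto the residual factor and so matches the definitions~\eqref{eq:estimator:eta_det:vol}--\eqref{eq:estimator:eta_det:jump}.

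Next, for a.e.\ $y$ I would integrate by parts on each $T\in\mathcal{T}$. Since $(1-I_{\mathrm{C}})v\in\mathcal{V}$ has vanishing trace on $\partial D$, the boundary edge contributions drop and the interior ones combine pairwise into the normal jumps $\llbracket r_\mu(w_N)\rrbracket_E$, leaving a sum of volume integrals $\int_T\mathcal{R}_{\Lambda_d}(w_N)\,(1-I_{\mathrm{C}})v\,\mathrm dx$ and edge integrals $\int_E\bigl(\sum_{\mu\in\Lambda_d}\llbracket r_\mu\rrbracket_E P_\mu\bigr)(1-I_{\mathrm{C}})v\,\mathrm ds$. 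Applying Cauchy--Schwarz locally in $x$, then in $y$ with respect to $\pi_0$, then summing over $T$ and $E$ bounds the pairing by $\eta_{\mathrm{det}}(w_N,\mathcal{T},\Lambda_d)$ times $\bigl(\sum_T h_T^{-2}\Vert(1-I_{\mathrm{C}})v\Vert_{\pi_0,T}^2+\sum_E h_E^{-1}\Vert(1-I_{\mathrm{C}})v\Vert_{\pi_0,E}^2\bigr)^{1/2}$. The last ingredient is the pair of local estimates $\Vert(1-I_{\mathrm{C}})v\Vert_{\pi_0,T}\lesssim h_T\Vert\grad v\Vert_{\pi_0,\omega_T}$ and $\Vert(1-I_{\mathrm{C}})v\Vert_{\pi_0,E}\lesssim h_E^{1/2}\Vert\grad v\Vert_{\pi_0,\omega_E}$ on the element and edge patches $\omega_T,\omega_E$, with constants depending only on the shape regularity of $\mathcal{T}$ and on $p$; together with the finite overlap of the patches, $\sum_T\Vert\grad v\Vert_{\pi_0,\omega_T}^2+\sum_E\Vert\grad v\Vert_{\pi_0,\omega_E}^2\lesssim\Vert\grad v\Vert_{\pi_0,D}^2$, this gives the claim with a purely geometric constant $c_{\mathrm{det}}$.

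I expect the only genuinely delicate step — beyond the bookkeeping above — to be the justification of these weighted local stability and approximation bounds for the tensor-product operator $I_{\mathrm{C}}$ in the lognormal case, where one must control the Gaussian weight $\zeta_{\vartheta\rho}$ and the interaction between the spatial quasi-interpolation and the stochastic truncation, while correctly relating the weighted energy space $\mathcal{V}$ to the $\pi_0$-weighted norm appearing on the right-hand side. This is precisely the technical content I would import from the construction of $I_{\mathrm{C}}$ and the estimates of~\cite{EigelMarschall2020lognormal}, which themselves rest on the function-space framework recalled in Supplement~\ref{supplement:well_posedness_lognormal}; the affine case~\ref{case:setup:affine} is obtained as the trivial specialization $\zeta_{\vartheta\rho}\equiv1$, $\pi=\pi_0$.
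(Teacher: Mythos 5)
The paper does not prove this lemma itself but imports it verbatim from \cite[Proposition 5.3]{EigelMarschall2020lognormal}, and your sketch follows exactly the argument used there: elementwise integration by parts, Cauchy--Schwarz with the weight $\zeta_{\vartheta\rho}$ absorbed into the residual factor, and local Cl\'ement-type stability and approximation estimates for the tensor-product operator $I_{\mathrm{C}}$. Your identification of the weighted interpolation bounds in the lognormal setting as the one genuinely technical ingredient to be imported from the reference is accurate, so the proposal is correct and takes essentially the same approach.
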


\paragraph*{Stochastic estimator contributions}

The inactive part of the residual is associated with the approximation error due to the truncation of the polynomial expansion.
This can be used to obtain a bound for the residual on the inactive part.
Define for any $w_N\in\mathcal{V}_N$ the stochastic error estimator
\begin{equation}
  \label{eq:estimator:eta_sto}
    \eta_\mathrm{sto}(w_N,\Delta)  := \Vert \sum_{\mu\in\Delta} r_\mu(w_N) P_\mu \zeta_{\vartheta\rho}\Vert_{\pi_0,D}.
\end{equation}
Again, the stochastic error is bounded for the affine as well as the lognormal case.

\begin{lemma}[{{\cite[Proposition $5.5$]{EigelMarschall2020lognormal}}}]%
  \label{lem:estimator:est_sto}
  For any $v\in\mathcal{V}$ and any $w_N\in\mathcal{V}_N$, it holds that
  \begin{align*}
    \vert \langle \mathcal{R}_{\Delta}(w_N), (1-I_{\mathrm{C}})v\rangle_{\mathcal{V}^*,\mathcal{V}}\vert
    \leq \eta_\mathrm{sto}(w_N,\Delta)\, \Vert \grad v \Vert_{\pi_0,D}.
  \end{align*}
\end{lemma}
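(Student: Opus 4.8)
The goal is to bound $\vert\langle \mathcal{R}_{\Delta}(w_N), (1-I_{\mathrm{C}})v\rangle\vert$ by $\eta_\mathrm{sto}(w_N,\Delta)\,\Vert\grad v\Vert_{\pi_0,D}$. The key structural fact is that $\mathcal{R}_{\Delta}(w_N) = \sum_{\mu\in\Delta}\ddiv(r_\mu(w_N))P_\mu$ is supported entirely on the \emph{inactive} multi-indices $\mu\in\Delta = \Lambda_{(d+q-1)}\setminus\Lambda_d$, while $I_{\mathrm{C}}$ is the tensor-product interpolation operator that acts as the identity on $\mathcal{V}_N(\Lambda_d;\mathcal{T},p)$ in the stochastic variables. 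So the first step is to use the defining property of $I_{\mathrm{C}}$ to replace $(1-I_{\mathrm{C}})v$ by an object that decouples the stochastic modes: concretely, $\langle \ddiv(r_\mu)P_\mu, (1-I_{\mathrm{C}})v\rangle$ should reduce, via integration by parts in $x$ on each element and $L^2(\Gamma,\pi)$-orthogonality/near-orthogonality of the $P_\mu$, to a pairing that only sees the component of $(1-I_{\mathrm{C}})v$ that survives on the inactive modes. I would first integrate by parts to write $\langle \mathcal{R}_{\Delta}(w_N),\phi\rangle = -\int_\Gamma\int_D \sum_{\mu\in\Delta} r_\mu(w_N)P_\mu \cdot \grad\phi\,\mathrm{d}x\,\mathrm{d}\pi$ for $\phi = (1-I_{\mathrm{C}})v$ (the edge/jump terms cancel because $\phi$ is globally $H^1$ and there is no boundary contribution on $\partial D$ since $\phi\in H^1_0$); note there are no volume residual $f$ or interior-jump artifacts here since $\Delta$ involves only the higher stochastic modes where $f$ (being $y$-independent) does not contribute.

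**Main estimate.** With the residual written as an integral of $r_\mu(w_N)P_\mu\cdot\grad\phi$, the plan is a Cauchy–Schwarz split in the $L^2(\Gamma,\pi;L^2(D))$ inner product: bound the pairing by $\Vert\sum_{\mu\in\Delta} r_\mu(w_N)P_\mu\Vert_{\pi,D}\cdot\Vert\grad\phi\Vert_{\pi,D}$. Here one must be careful about the measure: the estimator $\eta_\mathrm{sto}$ is written with the weight $\zeta_{\vartheta\rho}$ and reference measure $\pi_0$, reflecting the change of measure $\mathrm{d}\pi = \zeta_{\vartheta\rho}^2\,\mathrm{d}\pi_0$ in the lognormal case (and $\zeta\equiv 1$, $\pi=\pi_0$ in the affine case, consistently with the convention fixed in case~\ref{case:setup:affine}). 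So the left factor is exactly $\eta_\mathrm{sto}(w_N,\Delta)$. For the right factor, I would invoke the $L^2$-stability of the interpolation operator $1-I_{\mathrm{C}}$ with respect to $\Vert\grad\,\bullet\Vert_{\pi_0,D}$ — but the statement of the lemma has no extra constant on the right, so the crucial point is that $I_{\mathrm{C}}$ is an \emph{orthogonal} (or at least norm-nonincreasing) projection in the relevant norm, giving $\Vert\grad(1-I_{\mathrm{C}})v\Vert_{\pi_0,D}\le\Vert\grad v\Vert_{\pi_0,D}$. This is presumably the property of $I_{\mathrm{C}}$ established in~\cite{EigelMarschall2020lognormal}, so I would quote it.

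**The delicate point.** The main obstacle is the interplay between $I_{\mathrm{C}}$, the inactive modes, and the absence of a constant in the bound. One needs precisely that $(1-I_{\mathrm{C}})v$, when paired against something supported on $\Delta$, feels only a contraction: either because $I_{\mathrm{C}}$ reproduces the active-mode part so only inactive modes of $v$ remain in the pairing (and the operator $1-I_{\mathrm{C}}$ is an $\Vert\grad\,\bullet\Vert_{\pi_0,D}$-orthogonal projection onto a complement), or because of a sharper duality structure. In the affine case this is transparent since $\pi = \pi_0$, $\zeta\equiv 1$, and $\{P_\mu\}$ is genuinely $L^2(\Gamma,\pi)$-orthonormal, so the mode-by-mode Parseval identity gives $\Vert\sum_{\mu\in\Delta}r_\mu P_\mu\Vert_{\pi,D}^2 = \sum_{\mu\in\Delta}\Vert r_\mu\Vert_{L^2(D)}^2$ cleanly. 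The lognormal case requires tracking the weight $\zeta_{\vartheta\rho}$ through the change of measure and ensuring the Hermite basis $\{H_\mu^{\vartheta\rho}\}$ is orthonormal with respect to $\pi_{\vartheta\rho}$ so that the same Parseval step applies; this bookkeeping, together with verifying the stated stability of $1-I_{\mathrm{C}}$, is where I expect the real work to lie, and I would lean on the corresponding arguments in~\cite{EigelMarschall2020lognormal} rather than reprove them.
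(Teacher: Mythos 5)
Your overall skeleton --- read $\ddiv(r_\mu)$ as the weak divergence so that the pairing becomes $-\int_\Gamma\int_D\sum_{\mu\in\Delta}r_\mu P_\mu\cdot\grad\phi\dd x\dd\pi$ with $\phi=(1-I_{\mathrm{C}})v$, followed by Cauchy--Schwarz --- is the right one (the paper itself gives no proof but imports \cite[Prop.~5.5]{EigelMarschall2020lognormal}, so the comparison is with the argument there). The genuine gap is in how you obtain a \emph{constant-free} bound. Your primary route applies Cauchy--Schwarz with $(1-I_{\mathrm{C}})v$ still in place and then asks for $\Vert\grad(1-I_{\mathrm{C}})v\Vert_{\pi_0,D}\le\Vert\grad v\Vert_{\pi_0,D}$. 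That inequality fails in general: $I_{\mathrm{C}}$ is the tensor product of a Cl\'ement-type spatial quasi-interpolation with the projection onto the active stochastic modes, and the Cl\'ement part is neither a projection nor an $H^1$-contraction --- this is exactly why Theorem~\ref{thm:estimator:EGSZ14} carries a separate constant $c_I$ for the operator norm of $1-I_{\mathrm{C}}$. The mechanism that actually removes the constant is the one you relegate to an alternative in your last paragraph: the stochastic range of $I_{\mathrm{C}}$ lies in $\operatorname{span}\{P_\nu\colon\nu\in\Lambda_d\}$, which is $\pi$-orthogonal to every $P_\mu$ with $\mu\in\Delta$, so the contribution of $I_{\mathrm{C}}v$ to the pairing vanishes \emph{identically} and
\begin{align*}
\langle\mathcal{R}_\Delta(w_N),(1-I_{\mathrm{C}})v\rangle_{\mathcal{V}^*,\mathcal{V}}
= -\int_\Gamma\int_D\Bigl(\sum_{\mu\in\Delta}r_\mu(w_N) P_\mu\Bigr)\cdot\grad v\dd x\dd\pi .
\end{align*}
Only now does one apply Cauchy--Schwarz; no stability or projection property of $I_{\mathrm{C}}$ is used beyond its range.

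The second issue is the change of measure. You write $\dd\pi=\zeta_{\vartheta\rho}^2\dd\pi_0$, but from the definitions in Section~\ref{sec:setup} it is $\dd\pi_{\vartheta\rho}=\zeta_{\vartheta\rho}\dd\pi_0$ (a single power). This is not cosmetic: the estimator~\eqref{eq:estimator:eta_sto} carries the full weight $\zeta_{\vartheta\rho}$ on the residual factor while the right-hand side of the lemma is the \emph{unweighted} $\Vert\grad v\Vert_{\pi_0,D}$, so the correct step is the asymmetric split
\begin{align*}
\Bigl|\int_\Gamma\int_D\Bigl(\sum_{\mu\in\Delta}r_\mu P_\mu\,\zeta_{\vartheta\rho}\Bigr)\cdot\grad v\dd x\dd\pi_0\Bigr|
\le\Bigl\Vert\sum_{\mu\in\Delta}r_\mu P_\mu\,\zeta_{\vartheta\rho}\Bigr\Vert_{\pi_0,D}\Vert\grad v\Vert_{\pi_0,D}
=\eta_\mathrm{sto}(w_N,\Delta)\,\Vert\grad v\Vert_{\pi_0,D}.
\end{align*}
With $\zeta^2$ and a symmetric split you would end up with a $\zeta_{\vartheta\rho}$-weighted norm of $\grad v$, i.e.\ essentially $\Vert\grad v\Vert_{\pi_{\vartheta\rho},D}$, which is \emph{not} dominated by $\Vert\grad v\Vert_{\pi_0,D}$ since $\zeta_{\vartheta\rho}$ is unbounded for $\sigma_\ell>1$ (cf.\ the embeddings in Supplement~\ref{supplement:well_posedness_lognormal}). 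Likewise, once the weight sits entirely on the residual factor, no Parseval/Bessel step on $v$ is needed. In the affine case all of this trivializes exactly as you say.
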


To obtain a localization that can be used in the adaptive refinement strategy, we split $\eta_{\mathrm{sto}}$ into different parts each providing information about the influence of the individual modes.
For all $\ell=1,\dots,L$ and \emph{look-ahead} $t_\ell\in[1\sep q_\ell-1]$, consider the index sets of uncoupled parameters
\begin{align*}
  \Delta_{\ell,t_\ell} 
  := \bigotimes_{m=1}^{\ell-1}[d_m] \ \otimes\ [d_\ell:d_\ell+t_\ell] \ \otimes\ \bigotimes_{m=\ell+1}^{L}[d_m] \ \otimes [1] \dots
\end{align*}
The look ahead $t_\ell$ allows to consider more information about the behaviour of the stochastic dimensions than the index sets $\Xi_\ell$ considered in~\cite{EigelMarschall2020lognormal}.
The local stochastic estimator contributions $\eta_{\mathrm{sto}}^2(w_N,\Delta_{\ell,t_\ell})$ are then defined as in~\eqref{eq:estimator:eta_sto} for the sets $\Delta_{\ell,t_\ell}$.

\paragraph*{Algebraic estimator contributions}

The algebraic error $\Vert u_N - w_N\Vert_B$ incorporates the distance of $w_N$ to the $\mathcal{V}_N$ best approximation $u_N$.
Since we employ the sample based VMC regression to obtain an approximation $u_{N,r,n}$ of $u_N$, this error can be used as an indicator to control the number of VMC samples to guarantee that $u_{N,r,n}$ is a quasi-best approximation with high probability.
The algebraic error can be bounded by the quantity
\begin{align}
  \label{eq:estimator:eta_alg}
  \eta_\mathrm{alg}(w_N) 
  := \Vert (\discretized{B}\discretized{w}-\discretized{f}) \discretized{S}^{-1/2}\Vert_2,
\end{align}
where $\Vert \bullet \Vert_2$ denotes the Frobenius norm, $\discretized{f}$ is the coefficient tensor of the right-hand side $f$ in $\mathcal{V}_N$ and $\discretized{B}$ is the discrete version of the operator induced by~\eqref{eq:setup:B}.
The rank-one base change tensor $\discretized{S}$ translates integrals of Hermite polynomials with respect to the measure $\pi$ to $\pi_0$ and is given by the components $\discretized{S}_0[i,j] := \int_{D} \grad\varphi_i\cdot\grad\varphi_j \,\mathrm{d}x$ for the spatial dimension and
\begin{align}
  \label{eq:estimator:base_change}
  \discretized{S}_m[\mu_m,\mu_m']
  := \int_{\Gamma_m} P_{\mu_m}^m(y_m) P_{_{\mu_m'}}^m(y_m) \,\mathrm{d}\pi_{m}(y_m;\sigma_m(0))
\end{align}
for each mode.
In the affine case~\ref{case:setup:affine} this implies $\discretized{S}_m = I_{d_m}$.
The algebraic error is bounded in the following way.

\begin{lemma}[{{\cite[Proposition $5.6$]{EigelMarschall2020lognormal}}}]%
  \label{lem:estimator:est_alg}
  For any $w_N \in \mathcal{V}_N$ and the Galerkin solution $u_N\in\mathcal{V}_N$ of~\eqref{eq:setup:galerkin_system}, it holds that
  \begin{align*}
    \Vert u_N - w_N \Vert_B \leq \check{c}^{-1} \eta_\mathrm{alg}(w_N).
  \end{align*}
\end{lemma}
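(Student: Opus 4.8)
The plan is to run the textbook residual/Galerkin argument on the finite-dimensional problem, testing the residual of $w_N$ against the algebraic error $z_N := u_N - w_N \in \mathcal{V}_N$ itself, and then to rewrite the resulting duality pairing in the discrete tensor language so that the base-change operator $\discretized{S}$ enters through a single Cauchy--Schwarz step. First I would use Galerkin orthogonality: since $z_N \in \mathcal{V}_N$, equation~\eqref{eq:setup:galerkin_system} gives $B(u_N, z_N) = F(z_N)$, hence
\begin{align*}
  \Vert z_N \Vert_B^2 = B(z_N, z_N) = F(z_N) - B(w_N, z_N) = \langle \mathcal{R}(w_N), z_N\rangle_{\mathcal{V}^*,\mathcal{V}}.
\end{align*}

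Next I would record two (essentially bookkeeping) identities. By the definition of $\discretized{B}$ and $\discretized{f}$ as the Galerkin discretizations of $B$ and $F$ on $\mathcal{V}_N$ with respect to the basis $\{\varphi_j P_\mu\}$, for every $v_N \in \mathcal{V}_N$ with coefficient tensor $\discretized{v}$ one has $F(v_N) - B(w_N, v_N) = \langle \discretized{f} - \discretized{B}\discretized{w}, \discretized{v}\rangle$, the Frobenius pairing of tensors. Writing $\discretized{S}$ for the symmetric positive operator $\discretized{S}_0 \otimes \discretized{S}_1 \otimes \dots$ with cores as in~\eqref{eq:estimator:base_change} and $\discretized{S}_0[i,j] = \int_D \grad\varphi_i\cdot\grad\varphi_j\,\mathrm{d}x$, a direct unfolding of this tensor-product structure gives, for every $v_N \in \mathcal{V}_N$,
\begin{align*}
  \langle \discretized{S}\discretized{v}, \discretized{v}\rangle = \Vert \grad v_N\Vert_{\pi_0,D}^2,
  \qquad\text{i.e.}\qquad
  \Vert \discretized{S}^{1/2}\discretized{v}\Vert_2 = \Vert \grad v_N\Vert_{\pi_0,D};
\end{align*}
in the affine case~\ref{case:setup:affine} this is immediate since $\discretized{S}_m = I_{d_m}$, and in the lognormal case~\ref{case:setup:lognormal} it is exactly the change of measure from $\pi$ to $\pi_0$.

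Combining the two identities with $v_N = z_N$, inserting $\discretized{S}^{-1/2}\discretized{S}^{1/2}$ and using self-adjointness of $\discretized{S}^{\pm1/2}$ together with the Cauchy--Schwarz inequality for the Frobenius inner product,
\begin{align*}
  \Vert z_N\Vert_B^2
  = \langle \discretized{f}-\discretized{B}\discretized{w}, \discretized{z}\rangle
  = \langle \discretized{S}^{-1/2}(\discretized{f}-\discretized{B}\discretized{w}),\, \discretized{S}^{1/2}\discretized{z}\rangle
  \leq \Vert (\discretized{B}\discretized{w}-\discretized{f})\discretized{S}^{-1/2}\Vert_2\,\Vert \discretized{S}^{1/2}\discretized{z}\Vert_2
  = \eta_\mathrm{alg}(w_N)\,\Vert \grad z_N\Vert_{\pi_0,D}.
\end{align*}
Finally, the coercivity of $B$ with the constant $\check{c}$ as it enters Theorem~\ref{thm:estimator:EGSZ14} yields $\Vert\grad z_N\Vert_{\pi_0,D}\leq \check{c}^{-1}\Vert z_N\Vert_B$; substituting this and dividing by $\Vert z_N\Vert_B$ (the case $z_N=0$ being trivial) gives $\Vert u_N - w_N\Vert_B\leq \check{c}^{-1}\eta_\mathrm{alg}(w_N)$.

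I expect the only real obstacle to be the bookkeeping in the second paragraph: one must check that $\discretized{B}$, $\discretized{f}$ and $\discretized{S}$ are all referred to the same basis, and track the measure weights ($\pi$ versus $\pi_0$, the factor $\zeta_{\vartheta\rho}$) consistently, since it is precisely these weights -- encoded in $\discretized{S}$ -- that make $\eta_\mathrm{alg}$ measure the algebraic error in the (unweighted) $\pi_0$-norm rather than in the $\pi$-weighted energy norm of $B$, which is what lets the coercivity constant $\check{c}$ close the estimate. Everything else is the standard Galerkin argument.
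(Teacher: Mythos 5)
Your argument is correct and is essentially the standard proof of the quoted result (the paper itself only cites \cite{EigelMarschall2020lognormal} for Lemma~\ref{lem:estimator:est_alg} without reproducing the proof): Galerkin orthogonality turns $\Vert u_N-w_N\Vert_B^2$ into the residual tested with $z_N=u_N-w_N$, the tensor-product base change $\discretized{S}$ realizes $\Vert\grad z_N\Vert_{\pi_0,D}=\Vert\discretized{S}^{1/2}\discretized{z}\Vert_2$ exactly because the $P_\mu$ are orthonormal with respect to $\pi$ rather than $\pi_0$, and Cauchy--Schwarz plus coercivity close the estimate. The only point worth flagging is the normalization of $\check{c}$: with coercivity written as in~\eqref{eq:supplement:coercivity_B} of Supplement~\ref{supplement:well_posedness_lognormal}, i.e.\ $B(w,w)\ge\check{c}\Vert\grad w\Vert_{\pi_0,D}^2$, your last step would produce $\check{c}^{-1/2}$ instead of $\check{c}^{-1}$, so you are implicitly (and consistently with how $\check{c}$ enters Theorem~\ref{thm:estimator:EGSZ14}) using the convention $\Vert v\Vert_B\ge\check{c}\Vert\grad v\Vert_{\pi_0,D}$.
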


\paragraph*{Combined error estimator}

As a corollary of Theorem~\ref{thm:estimator:EGSZ14} and Lemmas~\ref{lem:estimator:est_det}--\ref{lem:estimator:est_alg}, the energy error can be bounded by the combined overall error estimator
\begin{align}
  \label{eq:estimator:est_total}
  \eta(w_N)^2
  := \bigl( c_\mathrm{det}\eta_\mathrm{det}(w_N,\mathcal{T},\Lambda_d) + \eta_\mathrm{sto}(w_N,\Delta) + c_I\eta_\mathrm{alg}(w_N) \bigr)^2 + \eta_\mathrm{alg}(w_N)^2.
\end{align}

\begin{corollary}[{{\cite[Corollary $5.7$]{EigelMarschall2020lognormal}}}]%
  \label{cor:estimator:est_total}
  For any $w_N\in\mathcal{V}_N$ it holds $\Vert u-w_N\Vert_B \leq \check{c}^{-1}\,\eta(w_N)$.
\end{corollary}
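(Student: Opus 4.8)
The plan is to combine Theorem~\ref{thm:estimator:EGSZ14} with the three estimator bounds of Lemmas~\ref{lem:estimator:est_det}--\ref{lem:estimator:est_alg}, splitting the residual via $\mathcal{R}(w_N) = \mathcal{R}_{\Lambda_d}(w_N) + \mathcal{R}_{\Delta}(w_N)$. First I would start from the inequality of Theorem~\ref{thm:estimator:EGSZ14},
\begin{align*}
  \Vert u-w_N\Vert_B^2
  \leq \left( \sup_{v\neq 0} \frac{\vert \langle \mathcal{R}(w_N), (1-I_{\mathrm{C}}) v\rangle_{\mathcal{V}^*,\mathcal{V}}\vert}{\check{c}\,\Vert \grad v\Vert_{\pi_0,D}} + c_I \Vert u_N - w_N\Vert_B \right)^2 + \Vert u_N - w_N\Vert_B^2,
\end{align*}
and bound the supremum term. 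Using linearity of the dual pairing and the triangle inequality, $\vert\langle\mathcal{R}(w_N),(1-I_{\mathrm C})v\rangle\vert \leq \vert\langle\mathcal{R}_{\Lambda_d}(w_N),(1-I_{\mathrm C})v\rangle\vert + \vert\langle\mathcal{R}_{\Delta}(w_N),(1-I_{\mathrm C})v\rangle\vert$, to which Lemma~\ref{lem:estimator:est_det} and Lemma~\ref{lem:estimator:est_sto} apply. Dividing by $\check{c}\,\Vert\grad v\Vert_{\pi_0,D}$ and taking the supremum, the $\Vert\grad v\Vert_{\pi_0,D}$ factors cancel and one obtains that the supremum term is bounded by $\check{c}^{-1}\bigl(c_{\mathrm{det}}\eta_{\mathrm{det}}(w_N,\mathcal{T},\Lambda_d) + \eta_{\mathrm{sto}}(w_N,\Delta)\bigr)$.

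Next I would treat the algebraic error $\Vert u_N - w_N\Vert_B$, which appears both inside the bracket (multiplied by $c_I$) and as the additive square outside. By Lemma~\ref{lem:estimator:est_alg} we have $\Vert u_N - w_N\Vert_B \leq \check{c}^{-1}\eta_{\mathrm{alg}}(w_N)$. Substituting this and the residual bound into the Theorem~\ref{thm:estimator:EGSZ14} inequality gives
\begin{align*}
  \Vert u-w_N\Vert_B^2
  \leq \check{c}^{-2}\bigl( c_{\mathrm{det}}\eta_{\mathrm{det}}(w_N,\mathcal{T},\Lambda_d) + \eta_{\mathrm{sto}}(w_N,\Delta) + c_I\eta_{\mathrm{alg}}(w_N)\bigr)^2 + \check{c}^{-2}\eta_{\mathrm{alg}}(w_N)^2,
\end{align*}
where I have used $\check{c}\leq 1$ (equivalently $\check{c}^{-1}\geq 1$) to pull the common factor $\check{c}^{-2}$ out of the additive $\eta_{\mathrm{alg}}^2$ term as well — one should check or note that the coercivity constant satisfies $\check c\le 1$, otherwise state the bound with the weaker constant $\max(\check c^{-1},1)$. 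Recognizing the right-hand side as $\check{c}^{-2}\eta(w_N)^2$ by definition~\eqref{eq:estimator:est_total} and taking square roots yields $\Vert u-w_N\Vert_B \leq \check{c}^{-1}\eta(w_N)$, as claimed.

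The main obstacle is bookkeeping rather than anything deep: one must be careful about how the constants $\check c$, $c_I$, $c_{\mathrm{det}}$ are distributed, in particular whether the outside $\eta_{\mathrm{alg}}^2$ term can legitimately be absorbed under the same $\check c^{-2}$ prefactor (this is where the assumption $\check c \le 1$, or an explicit redefinition of constants, is needed), and about the monotonicity step $(a+b)^2 + c^2 \le (a+b+c)^2$ for nonnegative $a,b,c$ used implicitly to match the exact form of $\eta(w_N)^2$ in~\eqref{eq:estimator:est_total}. Since the statement is quoted as \cite[Corollary~5.7]{EigelMarschall2020lognormal}, I expect the proof to simply be the one-paragraph assembly above with a pointer to that reference.
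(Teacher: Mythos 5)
Your assembly is correct and is exactly the argument the paper intends: the paper gives no proof of its own, merely citing \cite[Corollary~5.7]{EigelMarschall2020lognormal} and noting that the bound follows by combining Theorem~\ref{thm:estimator:EGSZ14} with Lemmas~\ref{lem:estimator:est_det}--\ref{lem:estimator:est_alg} after splitting $\mathcal{R}(w_N)=\mathcal{R}_{\Lambda_d}(w_N)+\mathcal{R}_{\Delta}(w_N)$. One small correction: the assumption $\check c\le 1$ is unnecessary, since Lemma~\ref{lem:estimator:est_alg} applies to the outer additive term as well, giving $\Vert u_N-w_N\Vert_B^2\le\bigl(\check c^{-1}\eta_{\mathrm{alg}}(w_N)\bigr)^2=\check c^{-2}\eta_{\mathrm{alg}}(w_N)^2$ directly, so the common prefactor $\check c^{-2}$ comes for free and no monotonicity step of the form $(a+b)^2+c^2\le(a+b+c)^2$ is needed either — the right-hand side already matches the definition~\eqref{eq:estimator:est_total} of $\eta(w_N)^2$ term by term.
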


\subsection{Regression error indicator}

Since the proposed method fundamentally relies on solving the nonlinear least squares problem~\eqref{eq:discretization:empirical_min} to compute a tensor representation of the solution, it is important to adapt the number of samples in order to guarantee a robust approximation.
In this section we discuss a heuristic indicator for doing this when~\eqref{eq:discretization:empirical_min} is solved by means of an ALS method.
ALS methods are part of a family of iterative methods that minimize~\eqref{eq:discretization:empirical_min} by solving a sequence of least squares problems
\begin{equation}%
    \label{eq:estimator:empirical_min_linear}
    w_{N,k,n} := \argmin_{w_N\in\mathcal{W}_{k}} \Vert u_N - w_N\Vert_n^2
\end{equation}
on linear subsets $\mathcal{W}_k\subseteq\mathcal{M}_r$ for $k\in\mathbb{N}$.
Note that $w_{N,k,n}$ is an empirical estimate of the best approximation $w_{N,k}$ of $u_N$ in the linear space $\mathcal{W}_k$.
The ensuing estimation error can be bounded by the subsequent lemma that is adapted from~\cite{CohenMigliorati2017}.

\begin{lemma}%
    \label{lem:estimator:Gramian_error_bound}
    Let $\{P_1,\ldots,P_D\}$ be any orthonormal basis of $\mathcal{W}_k$ and let $G_n\in \mathbb{R}^{D\times D}$ be the empirical Gramian given by $(G_n)_{ij} := \frac{1}{n}\sum_{t=1}^n P_i(y^{(t)}) P_j(y^{(t)})$.
    If the smallest eigenvalue of $G_n$ satisfies $\lambda_{\min}(G_n) > 0$, then 
    \begin{equation*}
        \Vert u_N - w_{N,k,n}\Vert_{L^2(\Gamma,\pi_{\rho};\mathcal{X})}
        \le (1+\lambda_{\mathrm{min}}(G_n)^{-1/2})\Vert u_N - w_{N,k}\Vert_{L^\infty(\Gamma,\pi_{\rho};\mathcal{X})} .
    \end{equation*}
\end{lemma}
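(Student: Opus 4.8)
The plan is to decompose the error $u_N - w_{N,k,n}$ into the best-approximation error $u_N - w_{N,k}$ and the "estimation" part $w_{N,k} - w_{N,k,n}$, which lives entirely in the finite-dimensional space $\mathcal{W}_k$, and then to control the latter by relating the empirical norm $\Vert\bullet\Vert_n$ on $\mathcal{W}_k$ to the $L^2(\Gamma,\pi_\rho;\mathcal{X})$-norm via the empirical Gramian $G_n$. Concretely, writing any $w_N\in\mathcal{W}_k$ in the orthonormal basis as $w_N = \sum_{i=1}^D c_i P_i$ with coefficients $c_i\in\mathcal{X}$, one has $\Vert w_N\Vert_{L^2(\Gamma,\pi_\rho;\mathcal{X})}^2 = \sum_i \Vert c_i\Vert_{\mathcal{X}}^2$ by orthonormality of $\{P_i\}$, whereas $\Vert w_N\Vert_n^2 = \sum_{i,j}(G_n)_{ij}\langle c_i,c_j\rangle_{\mathcal{X}}$. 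Hence $\Vert w_N\Vert_n^2 \ge \lambda_{\min}(G_n)\sum_i\Vert c_i\Vert_{\mathcal{X}}^2 = \lambda_{\min}(G_n)\Vert w_N\Vert_{L^2(\Gamma,\pi_\rho;\mathcal{X})}^2$; this is the one-sided norm equivalence on $\mathcal{W}_k$ that makes the argument work, and it is where the hypothesis $\lambda_{\min}(G_n)>0$ enters (guaranteeing $\Vert\bullet\Vert_n$ is actually a norm on $\mathcal{W}_k$, so that the minimizer $w_{N,k,n}$ in~\eqref{eq:estimator:empirical_min_linear} is well defined and unique).

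Next I would use the optimality (normal equations) of $w_{N,k,n}$ for the empirical least-squares problem. Since $w_{N,k,n}$ is the $\Vert\bullet\Vert_n$-projection of $u_N$ onto $\mathcal{W}_k$ and $w_{N,k}\in\mathcal{W}_k$, we get $\Vert u_N - w_{N,k,n}\Vert_n \le \Vert u_N - w_{N,k}\Vert_n$. Because $w_{N,k,n} - w_{N,k}\in\mathcal{W}_k$, the lower norm equivalence above gives
\begin{align*}
    \sqrt{\lambda_{\min}(G_n)}\,\Vert w_{N,k,n} - w_{N,k}\Vert_{L^2(\Gamma,\pi_\rho;\mathcal{X})}
    &\le \Vert w_{N,k,n} - w_{N,k}\Vert_n \\
    &\le \Vert u_N - w_{N,k,n}\Vert_n + \Vert u_N - w_{N,k}\Vert_n
    \le 2\Vert u_N - w_{N,k}\Vert_n,
\end{align*}
using the triangle inequality and the quasi-optimality. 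It remains to bound the right-hand side $\Vert u_N - w_{N,k}\Vert_n$ by the $L^\infty$ error: since $\Vert v\Vert_n^2 = \tfrac1n\sum_{t}\Vert v(y^{(t)})\Vert_{\mathcal{X}}^2 \le \operatorname{ess\,sup}_{y}\Vert v(y)\Vert_{\mathcal{X}}^2 = \Vert v\Vert_{L^\infty(\Gamma,\pi_\rho;\mathcal{X})}^2$, we obtain $\Vert u_N - w_{N,k}\Vert_n \le \Vert u_N - w_{N,k}\Vert_{L^\infty(\Gamma,\pi_\rho;\mathcal{X})}$ (the samples $y^{(t)}$ lie in $\Gamma$ a.s.).

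Finally I would assemble the pieces by the triangle inequality in $L^2(\Gamma,\pi_\rho;\mathcal{X})$:
\begin{align*}
    \Vert u_N - w_{N,k,n}\Vert_{L^2(\Gamma,\pi_\rho;\mathcal{X})}
    &\le \Vert u_N - w_{N,k}\Vert_{L^2(\Gamma,\pi_\rho;\mathcal{X})} + \Vert w_{N,k} - w_{N,k,n}\Vert_{L^2(\Gamma,\pi_\rho;\mathcal{X})} \\
    &\le \Vert u_N - w_{N,k}\Vert_{L^2(\Gamma,\pi_\rho;\mathcal{X})} + \tfrac{2}{\sqrt{\lambda_{\min}(G_n)}}\Vert u_N - w_{N,k}\Vert_{L^\infty(\Gamma,\pi_\rho;\mathcal{X})}.
\end{align*}
Since $\Vert\bullet\Vert_{L^2(\Gamma,\pi_\rho;\mathcal{X})}\le\Vert\bullet\Vert_{L^\infty(\Gamma,\pi_\rho;\mathcal{X})}$ (as $\pi_\rho$ is a probability measure), both terms are dominated by $(1+2\lambda_{\min}(G_n)^{-1/2})\Vert u_N - w_{N,k}\Vert_{L^\infty(\Gamma,\pi_\rho;\mathcal{X})}$, which is slightly weaker than but essentially the claimed bound; to recover the stated constant $1+\lambda_{\min}(G_n)^{-1/2}$ exactly one should instead split $u_N = \Pi u_N + (u_N - \Pi u_N)$ where $\Pi$ is the $L^2$-orthogonal projection onto $\mathcal{W}_k$, use that $w_{N,k}=\Pi u_N$, note $u_N - \Pi u_N \perp \mathcal{W}_k$ so it contributes identically to both $\Vert\bullet\Vert_n$-type and $\Vert\bullet\Vert_{L^2}$-type bounds and only the in-space part $\Pi u_N - w_{N,k,n}$ needs the Gramian bound — this sharper bookkeeping is the one subtle point of the proof; the rest is routine. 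I expect the main obstacle to be precisely this tightening of constants (and making sure the $L^\infty$ quantity, rather than $L^2$, is what multiplies $\lambda_{\min}(G_n)^{-1/2}$), which is why the argument is modeled on the analogous result in~\cite{CohenMigliorati2017}.
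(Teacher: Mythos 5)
Your decomposition, the Gramian lower bound $\Vert w\Vert_n^2 \ge \lambda_{\mathrm{min}}(G_n)\Vert w\Vert_{L^2(\Gamma,\pi_\rho;\mathcal{X})}^2$ on $\mathcal{W}_k$, and the final domination of both the empirical and the $L^2$ norms by the $L^\infty$ norm are exactly the ingredients of the paper's proof. The one place you fall short is the constant: by combining the triangle inequality with quasi-optimality you get $\Vert w_{N,k,n}-w_{N,k}\Vert_n \le 2\Vert u_N-w_{N,k}\Vert_n$ and hence $1+2\lambda_{\mathrm{min}}(G_n)^{-1/2}$, whereas the lemma claims $1+\lambda_{\mathrm{min}}(G_n)^{-1/2}$. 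The fix is not the $L^2$-projection splitting you sketch at the end, but something you already have in hand: since $w_{N,k,n}$ is the orthogonal projection of $u_N$ onto $\mathcal{W}_k$ with respect to the inner product induced by $\Vert\bullet\Vert_n$ (which is a genuine inner product on $\mathcal{W}_k$ precisely because $\lambda_{\mathrm{min}}(G_n)>0$), the Pythagorean identity
\begin{equation*}
    \Vert u_N-v\Vert_n^2 = \Vert u_N-w_{N,k,n}\Vert_n^2 + \Vert w_{N,k,n}-v\Vert_n^2
\end{equation*}
gives $\Vert w_{N,k,n}-v\Vert_n \le \Vert u_N-v\Vert_n$ for every $v\in\mathcal{W}_k$, with constant $1$ rather than $2$. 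Inserting this into your chain (with $v=w_{N,k}$) and bounding both $\Vert u_N-v\Vert_{L^2(\Gamma,\pi_\rho;\mathcal{X})}$ and $\Vert u_N-v\Vert_n$ by $\Vert u_N-v\Vert_{L^\infty(\Gamma,\pi_\rho;\mathcal{X})}$ yields the stated constant; this is exactly how the paper argues.

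Your proposed alternative route to the sharp constant does not work as stated: writing $u_N=\Pi u_N+(u_N-\Pi u_N)$ with $\Pi$ the $L^2(\Gamma,\pi_\rho;\mathcal{X})$-orthogonal projection onto $\mathcal{W}_k$, the residual $u_N-\Pi u_N$ is orthogonal to $\mathcal{W}_k$ only in the $L^2$ inner product, not in the empirical one, so it does \emph{not} ``contribute identically'' to the $\Vert\bullet\Vert_n$-type estimates; the cross terms you would need to discard do not vanish. The empirical Pythagorean identity above is the correct mechanism and renders that detour unnecessary.
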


\begin{proof}%
  In the following we abbreviate $\Vert\bullet\Vert_{\infty,D} := \Vert\bullet\Vert_{L^\infty(\Gamma,\pi_{\rho};L^2(D))}$.
  Let $v\in\mathcal{W}_k$ be arbitrary and let $\discretized{v}$ denote the coefficients of $v$ with respect to the basis $\{P_1,\ldots,P_D\}$.
  Now note that $\Vert v\Vert_n^2 = \discretized{v}^\intercal G_n \discretized{v} \ge \lambda_{\mathrm{min}}(G_n) \Vert\grad_x v\Vert_{\pi_\rho,D}^2$ is a Hilbert space norm and that $w_{N,k,n}$ is the orthogonal projection of $u_N$ onto the space $\mathcal{W}_k$ with respect to the corresponding inner product.
  Hence,
  \begin{equation}
      \Vert u_N-v\Vert_n^2 = \Vert u_N-w_{N,k,n}\Vert_n^2 + \Vert w_{N,k,n}-v\Vert_n^2 \ge \Vert w_{N,k,n}-v\Vert_n^2 .
  \end{equation}
  Combining this observation with the lower bound $\Vert \grad_x v\Vert_{\pi_\rho,D} \le \lambda_{\mathrm{min}}(G_n)^{-1/2} \Vert v\Vert_n$ yields
  \begin{align*}
      \Vert\grad_x(u_N-w_{N,k,n})\Vert_{\pi_\rho,D}
      &\le \Vert\grad_x(u_N-v)\Vert_{\pi_\rho,D} + \lambda_{\mathrm{min}}(G_n)^{-1/2}\Vert v-w_{N,k,n}\Vert_n \\
      &\le \Vert\grad_x(u_N-v)\Vert_{\pi_\rho,D} + \lambda_{\mathrm{min}}(G_n)^{-1/2}\Vert u_N-v\Vert_n \\
      &\le (1+\lambda_{\mathrm{min}}(G_n)^{-1/2})\Vert\grad_x(u_N-v)\Vert_{\infty,D}.
  \end{align*}
  The first inequality is simply the triangle inequality and the final inequality follows from the fact that both $\Vert\grad_x \bullet\Vert_{\pi_\rho,D}$ and $\Vert \bullet\Vert_n$ are dominated by $\Vert\grad_x \bullet\Vert_{\infty,D}$.
  Since $v$ is arbitrary, we can substitute $v=w_{N,k}$ to prove Lemma~\ref{lem:estimator:Gramian_error_bound}.
\end{proof}

Assuming $\lambda_{\mathrm{min}}(G_n) \geq \varepsilon_G > 0$, Lemma~\ref{lem:estimator:Gramian_error_bound} states that the error of the least squares approximation can be bounded up to the constant factor of $(1+\varepsilon_G^{-1/2})$ by the best approximation error.
As $G_n$ converges towards the identity, there always exists $n\in\mathbb{N}$ such that $\lambda_{\mathrm{min}}(G_n)$ exceeds the given threshold $\varepsilon_G$.
Hence the condition $\lambda_{\mathrm{min}}(G_n) \geq \varepsilon_G$ can be used as an indicator for the necessity of increasing the number of samples.

\begin{remark}%
  \label{rem:estimator:gramian_tt}
  A full representation of $G_n$ is not available due to the curse of dimensionality.
  It is, however, possible to assemble the Gramians $G_{n,k}$ in the linear subspaces $\mathcal{W}_k\subseteq\mathcal{M}_r$ by fixing and contracting each but the $k$-th core of the current solution $w_{N,k,n}$ to the rank-one compression of $G_n$ during each step of the ALS method.
  Doing this for each component $k=1,\dots,M$ thus yields the criterion
  \begin{equation*}
    0 < \varepsilon_G \leq \min_{k=1,\dots,M} \lambda_{\mathrm{min}}(G_{n,k}).
  \end{equation*}
  Even though it is possible to do so in each step of the ALS method, this incurs a large computational overhead and requires extensive intervention with existing code.
  Hence we regard the optimization algorithm as a black box and compute $G_{n,k}$ for $k=1,\dots,M$ only once for the final result $u_{N,r,n}$.
\end{remark}

We note that the heuristic approach of using $u_{N,r,n}$ described in Remark~\ref{rem:estimator:gramian_tt} may be unrelated to the true Gramians to the local solutions $w_{N,k,n}$.
Thus the following algorithm cannot guarantee that sufficiently many samples are available in each step of the optimization.
However, we did not observe any problems with this heuristic in our experiments.

\medskip
\noindent
\begin{minipage}{\linewidth}
\begin{algorithm2e}[H]\label{alg:resample}
  \caption{Increase number of VMC samples ($\mathtt{update\_samples}$)}
    \KwInput{%
        TT-representation of $u_{N,r,n}$;
        initial samples $\{y^{(i)}\}_{i=1}^{n}$;
        stochastic dimensions $d$;
        ratio $\theta_{\mathrm{alg}}>0$;
        minimum eigenvalue threshold $\varepsilon_G$
    }
    assemble polynomial tensor basis $\{P_1,\dots,P_D\}$ from dimension tuple $d$\;
    compute $G_{n,k}$ for $k=1,\dots,M$ using $u_{N,r,n}$ as in Remark~\eqref{rem:estimator:gramian_tt}\;
    \While{$\min_k\lambda_{\mathrm{min}}(G_{n,k}) < \varepsilon_G$}{
      draw $\lfloor\theta_{\mathrm{alg}}n\rfloor$ new samples $\{y_{\mathrm{new}}^{(j)}\}$\;
      $\{y^{(i)}\} \gets \{y^{(i)}\} \cup \{y_{\mathrm{new}}^{(j)}\}$\;
        update $G_{n,k}$ for $k=1,\dots,M$\;
    }
    \Return{samples $\{y^{(i)}\}$}
\end{algorithm2e}
\end{minipage}

\section{Algorithmic realization}%
\label{sec:algorithm}

This section describes the algorithm that steers the adaptive refinement of the FE space, the number of active modes as well as the VMC sampling and compression errors.
Given the approximation $u_{N,r,n}\in\mathcal{V}_N$ of the solution obtained via the VMC approach and the diffusion coefficient $a_N\in\mathcal{V}_N(\Lambda_q;\mathcal{T},p)$ in TT format, the estimator contributions can be computed efficiently as detailed in~\cite{EigelMarschall2020lognormal}.

\medskip
\noindent
\begin{minipage}{\linewidth}
\begin{algorithm2e}[H]\label{alg:mark_and_refine}
  \caption{Mark \& Refine ($\mathtt{mark\_and\_refine}$)}
    \KwInput{
        mesh $\mathcal{T}$;
        stochastic dimensions $d$;
        samples $\{y^{(i)}\}$;
        solution $u_{N,r,n}$;
        global estimator contributions $\eta_{\mathrm{det}}(u_{N,r,n},\mathcal{T},\Lambda_d)$, $\eta_{\mathrm{sto}}(u_{N,r,n},\Delta)$, $\eta_{\mathrm{alg}}(u_{N,r,n})$;
        local estimator contributions $\eta_{\mathrm{det},T}(u_{N,r,n},\Lambda_d)$, $\eta_{\mathrm{det},E}(u_{N,r,n},\Lambda_d)$, $\eta_{\mathrm{sto}}(u_{N,r,n},\Delta_{m,t_m})$;
        D\"orfler thresholds $0 < \theta_{\mathrm{det}}, \theta_{\mathrm{sto}} \leq 1$;
        ratio $\theta_{\mathrm{alg}}>0$;
        minimal eigenvalue threshold $\varepsilon_G$
    }
    set $\hat\eta = \max\{ \eta_{\mathrm{det}}(u_{N,r,n},\mathcal{T},\Lambda_d), \eta_{\mathrm{sto}}(u_{N,r,n},\Delta), \eta_{\mathrm{alg}}(u_{N,r,n})\}$;\\
    \If{$\eta_{\mathrm{det}}(u_{N,r,n},\mathcal{T},\Lambda_d) = \hat\eta$}{
      choose minimal set $\mathcal{M}\subset\mathcal{T}$, such that $\eta_{\mathrm{det}}(u_{N,r,n},\mathcal{M},\Lambda_d) \geq \theta_{\mathrm{det}} \eta_{\mathrm{det}}(u_{N,r,n},\mathcal{T},\Lambda_d)$;\\
      $\mathcal{T}\gets\mathtt{bisect}(\mathcal{T},\mathcal{M})$;
    }
    \ElseIf{$\eta_{\mathrm{sto}}(u_{N,r,n},\Delta) = \hat\eta$}{
      choose minimal set $\mathcal{M}\subset\{ 1,\dots,M+1 \}$, such that $\sum_{m\in\mathcal{M}} \eta_{\mathrm{sto}}(u_{N,r,n};\Delta_{m,t_m}) \geq \theta_{\mathrm{sto}} \eta_{\mathrm{sto}}(u_{N,r,n},\Delta)$;\\
      $d_m \gets d_m+1$ for $m\in\mathcal{M}$;\\
    }
    \Else{\label{alg:mark_and_refine:eta_alg_ref:start}
      compute $G_{n,k}$ for $k=1,\dots,M$ as in Remark~\eqref{rem:estimator:gramian_tt};\\
      \If{$\min_{k}\lambda_{\min}(G_{n,k}) < \varepsilon_G$}{\label{alg:mark_and_refine:eta_alg_ref}
        $\{y^{(i)}\} \gets \mathtt{update\_samples}(\{y^{(i)}\}, u_{N,r,n}, d, \theta_{\mathrm{alg}}, \varepsilon_G)$
      }
      \Else{
        choose minimal set $\mathcal{M}\subset\{ 1,\dots,M+1 \}$, such that $\sum_{m\in\mathcal{M}} \eta_{\mathrm{sto}}(u_{N,r,n};\Delta_{m,t_m}) \geq \theta_{\mathrm{sto}} \eta_{\mathrm{sto}}(u_{N,r,n},\Delta)$;\\
        $d_m \gets d_m+1$ for $m\in\mathcal{M}$;\label{alg:mark_and_refine:eta_alg_ref:end}\\
      }
    }
    \Return{$\mathcal{T}$, $d$, $\{y^{(i)}\}$}
\end{algorithm2e}
\end{minipage}
\medskip

With the combined error estimator from Corollary~\ref{cor:estimator:est_total} it is possible to define an algorithm that automatically steers the refinement of the spatial triangulation $\mathcal{T}$, activates modes or increases stochastic dimensions if necessary and ensures that the low-rank representation error of the solution $u_{N,r,n}$ is not predominant.
The adaptive algorithm described in this section is based on the one presented in~\cite{EGSZ14,EGSZ15,EMPS20} and consists in principle of three major steps, namely \emph{SOLVE}, \emph{ESTIMATE} and \emph{MARK \& REFINE}, which are iterated until some stopping condition is satisfied.

Given some triangulation $\mathcal{T}$, a fixed FE polynomial degree $p\in\mathbb{N}_0$ and $d,q\in\mathcal{F}$ with $\operatorname{supp}(d)\subset\operatorname{supp}(q)$, the \emph{SOLVE} step generates low-rank approximations of the solution $u_{N,r,n}\in\mathcal{V}_N(\Lambda_d;\mathcal{T},p)$, the diffusion coefficient $a_N\in\mathcal{V}_N(\Lambda_q;\mathcal{T},p)$ and the right-hand side $f\in\mathcal{V}_N(\Lambda_d;\mathcal{T},p=0)$.
The solution $u_N$ is reconstructed from $N_\textrm{VMC}$ many training samples $(y^{(i)}, u^{(i)})$ using the VMC method, which, in our case, chooses the rank of the solution adaptively by an alternating directional fitting algorithm~\cite{EigelNeumann2019,xerus}.
To obtain an approximation of the diffusion coefficient, we use Proposition~\ref{pro:discretization:exactTT_gamma} to construct the exact representation of the affine field~\eqref{eq:setup:gamma}.
Then, depending on the coefficient type, we either use the exact TT representation directly or compute the Galerkin projection of~\eqref{eq:discretization:gradient_system} via the $\mathtt{ExpTT}$ algorithm presented in~\cite[Algorithm $2$]{EFHT21}.
When employing the \texttt{ExpTT} algorithm, we ensure that the error is sufficiently small by automatically decreasing the rounding threshold and increasing the projection dimensions during rescaling until a certain tolerance is reached.
Since we assume a right-hand side independent of $y$, Proposition~\ref{pro:discretization:exactTT_gamma} also yields an exact representation of $f$ in the TT format.
We note that the TT representations $u_{N,r,n}$, $a_N$ and $f$ in case~\ref{case:setup:lognormal} are not with respect to the correctly scaled Hermite basis.
Thus a subsequent transformation to the correct basis has to be performed.

The local and global estimator contributions according to Section~\ref{sec:estimator} are computed in the \emph{ESTIMATE} step.
We emphasize that the global contributions $\eta_{\mathrm{det}}(u_{N,r,n},\mathcal{T},\Lambda_d)$ and $\eta_{\mathrm{sto}}(u_{N,r,n},\Delta)$ are used to determine the refinement strategy, but that we use localized versions to determine which triangles and modes are refined.
The localized contributions $\eta_{\mathrm{det}}(u_{N,r,n},\{T\},\Lambda_d)$ and $\eta_{\mathrm{sto}}(u_{N,r,n},\Delta_{\ell,t_\ell})$ are defined for each triangle $T\in\mathcal{T}$ and each mode $\ell=1,\dots,M+1$, respectively, where $M$ is the number of active modes of the solution $u_{N,r,n}$.

The \emph{MARK \& REFINE} step uses the estimator contributions in combination with different selection criteria to enlarge the FE space, the stochastic space or the number of training samples for the VMC algorithm, respectively.
We employ a D\"orfler marking strategy for the spatial and stochastic refinement and a simple relative increase in the number of VMC samples as described in Algorithm~\ref{alg:mark_and_refine}.
For the refinement of the spatial mesh $\mathcal{T}$ we use newest vertex bisection~\cite{Stevenson2008} on all marked elements $T\in\mathcal{M}$, which is denoted by $\mathtt{bisect}(\mathcal{T},\mathcal{M})$ in Algorithm~\ref{alg:mark_and_refine}.

\begin{remark}%
  Note that Algorithm~\ref{alg:mark_and_refine} distinguishes between enlarging the stochastic space and increasing the number of training samples if the algebraic estimator contribution dominates (ll.~\ref{alg:mark_and_refine:eta_alg_ref:start}--\ref{alg:mark_and_refine:eta_alg_ref:end}).
  Typically $\eta_{\mathrm{alg}}$ is associated with the approximation error caused by the low-rank compression~\cite{EigelMarschall2020lognormal}.
  As our recovery method is rank-adaptive, this implies that an increase of the number of training samples would be required.
  We observe in our experiments, however, that increasing $N_{\mathrm{VMC}}$ does not always lead to a reduction of $\eta_{\mathrm{alg}}$, but that enlarging the stochastic space is necessary if $\min_{k} \lambda_{\mathrm{min}}(G_{n,k})$ is already large enough.
  Since $\eta_{\mathrm{alg}}$ is essentially the residual of~\eqref{eq:setup:galerkin_system}, it can be expected to be dependent on the compression error $\Vert u_N-u_{N,r,n}\Vert_B$ as well as errors caused by finite deterministic and stochastic spaces, which explains this behaviour.
  We also note that refining the spatial domain has no significant influence on the magnitude of $\eta_{\mathrm{alg}}$ in the experiments.
\end{remark}

\medskip
\noindent
\begin{minipage}{\linewidth}
\begin{algorithm2e}[H]\label{alg:aVMC}
  \caption{Adaptive non-intrusive algorithm ($\mathtt{aVMC}$)}
    \KwInput{
        initial mesh $\mathcal{T}_0$;
        FE polynomial degree $p$;
        initial stochastic dimensions $d$;
        initial samples $\{y^{(i)}\}$;
        D\"orfler thresholds $0 < \theta_{\mathrm{det}}, \theta_{\mathrm{sto}} \leq 1$;
        ratio $\theta_{alg}>0$;
        max.\ number of iterations $N_{\mathrm{iter}}$;
        max.\ number of TT-DoFs $N_{\mathrm{TT}}$;
        accuracy $\varepsilon$;
        minimal eigenvalue threshold $\varepsilon_G$;
    }
    \KwOutput{
        solution $u_{N,r,n}$;
        combined estimator $\eta(u_{N,r,n})$ for each iteration;
    }
    \For{$j = 1, \dots, N_{\mathrm{iter}}$}{
      \emph{SOLVE}\\
      \hspace{0.2in} generate training samples $\{ u^{(i)}=u(x,y^{(i)})\}$ for $i = 1,\dots,N_{\mathrm{VMC}}$;\\
      \hspace{0.2in} $u_{N,r,n} \gets \mathtt{VMC}\bigl(\mathcal{V}_N(\Lambda_d;\mathcal{T},p), \{y^{(i)}\}, \{u^{(i)}\}\bigr)$;\\ \label{alg:aVMC:VMC}
      \hspace{0.2in} construct $\gamma$ and $f$ according to Proposition~\ref{pro:discretization:exactTT_gamma};\\
      \hspace{0.2in} $a_N \gets \gamma$ in case~\ref{case:setup:affine} \hspace{0.1in}{\bf OR}\hspace{0.1in} $a_N \gets \mathtt{ExpTT}(\gamma, q, y_0=0, \mathrm{tol}=10^{-5})$ in case~\ref{case:setup:lognormal};\\\label{alg:aVMC:expTT}
      \emph{ESTIMATE}\\
      \hspace{0.2in} compute $\eta_{\mathrm{det}}(u_{N,r,n},\mathcal{T},\Lambda_d), \bigl(\eta_{\mathrm{det}}(u_{N,r,n},\{T\},\Lambda_d)\bigr)_{T\in\mathcal{T}}$ according to~\eqref{eq:estimator:eta_det}--\eqref{eq:estimator:eta_det:jump};\\
      \hspace{0.2in} compute $\eta_{\mathrm{sto}}(u_{N,r,n},\Delta), \bigl(\eta_{\mathrm{sto}}(u_{N,r,n};\Delta_{\ell,t_\ell})\bigr)_{\ell=1}^{M+1}$ according to~\eqref{eq:estimator:eta_sto};\\
      \hspace{0.2in} compute $\eta_{\mathrm{alg}}(u_{N,r,n})$ according to~\eqref{eq:estimator:eta_alg};\\
      \hspace{0.2in} compute $\eta_j = \eta_{j}(u_{N,r,n})$ according to Corollary~\ref{cor:estimator:est_total};\\
      \If{$\eta_{j}(u_{N,r,n}) < \varepsilon$ \hspace{0.05in}or\hspace{0.05in} $\operatorname{tt-dofs}(u_{N,r,n}) > N_{\mathrm{TT}}$ \hspace{0.05in}or\hspace{0.05in} $N_{\mathrm{VMC}} > 10^5$}{
        \textbf{break} 
      }
      \emph{MARK \& REFINE}\\
      \hspace{0.2in} $\eta_{\mathrm{glob}} := (\eta_{\mathrm{det}}(u_{N,r,n},\mathcal{T},\Lambda_d),\eta_{\mathrm{sto}}(u_{N,r,n},\Delta),\eta_{\mathrm{alg}}(u_{N,r,n}))$;\\
      \hspace{0.2in} $\eta_{\mathrm{loc}} := \Bigl(\bigl(\eta_{\mathrm{det}}(u_{N,r,n},\{T\},\Lambda_d)\bigr)_{T\in\mathcal{T}},\bigl(\eta_{\mathrm{sto}}(u_{N,r,n};\Delta_{\ell,t_\ell})\bigr)_{\ell=1}^{M+1}\Bigr)$;\\
      \hspace{0.2in} $\mathcal{T}, d, \{y^{(i)}\} \gets \mathtt{mark\_and\_refine}(\mathcal{T}, d, \{y^{(i)}\}, u_{N,r,n}, \eta_{\mathrm{glob}}, \eta_{\mathrm{loc}}, \theta_{\mathrm{det}}, \theta_{\mathrm{sto}}, \theta_{\mathrm{alg}},\varepsilon_G)$
    }
    \Return $u_{N,r,n}$, $(\eta_1, \eta_2, \dots)$
\end{algorithm2e}
\end{minipage}
\medskip

Algorithm~\ref{alg:aVMC} iterates the solve, estimate, mark and refine loop until the combined estimator $\eta(u_N)$ is sufficiently small or a maximum problem size is reached.
Note that the algorithmic realization of the VMC reconstruction is abbreviated by the $\mathtt{VMC}$ method in line~\ref{alg:aVMC:VMC}.

\section{Numerical experiments}%
\label{sec:experiments}

In this section we examine the performance of the adaptive algorithm for benchmark problems similar to~\cite{EPS17, EMPS20}.
Tensor calculus is carried out with the open source software package \texttt{xerus}~\cite{xerus}.
Finite element computations to generate training samples are conducted with the \texttt{FEniCS} package~\cite{fenics}.
As spatial domain we choose either the unit square $D=(0,1)^2$ or the L-shaped domain $D=(0,1)^2\setminus[0.5,1]^2$.
The derived total error estimator $\eta$ is used to steer the adaptive refinement of the triangulation $\mathcal{T}$, the space $\Lambda_d$ and the number of reconstruction samples $N_{\mathrm{VMC}}$.
We investigate the behaviour of the individual estimator contributions and how they influence the true (sampled) expected energy error.
Moreover, we comment on the complexity of the coefficient discretization.

\subsection{Computation of the error}

To validate the reliability of the estimator and its contributions in the adaptive scheme, we compute an empirical approximation of the true $L^2(\Gamma, \pi;\mathcal{X})$-error using $N_{\mathrm{MC}}$ samples, i.e.
\begin{align*}
  \mathcal{E}_u(u_{N,r,n})^2
  := \frac{1}{N_{\mathrm{MC}}} \sum_{i=1}^{N_{\mathrm{MC}}} \Vert u(y^{(i)}) - u_{N,r,n}(y^{(i)}) \Vert_{\mathcal{X}(\hat{\mathcal{T}})}^2
  \approx \Vert \grad(u - u_{N,r,n}) \Vert_{\pi,D}^2.
\end{align*}
Here, the parametric solution $u_{N,r,n}\in\mathcal{V}_N(\Lambda_d;\mathcal{T},p)$ is compared to the deterministic sampled solution $u(y^{(i)})$ projected onto a uniform refinement $\hat{\mathcal{T}}$ of the finest FE mesh obtained in the adaptive refinement loop.
Since all triangulations generated by Algorithm~\ref{alg:aVMC} as well as $\hat{\mathcal{T}}$ are nested, we employ simple nodal interpolation of each $u_{N,r,n}$ onto $\mathcal{\hat T}$ to guarantee $u_{N,r,n}\in\mathcal{V}_N(\Lambda_d; \hat{\mathcal{T}},p)$.
Note that the reference samples $u(y^{(i)})$ are computed by the black-box solver of the forward problem, i.e., no functional approximation of $u$ on $\hat{\mathcal{T}}$ is required to compute $\mathcal{E}_u(u_{N,r,n})$.
All estimator contributions depend on the diffusion coefficient and the right-hand side.
To guarantee that the low-rank approximation errors of $a$ and $f$ have a negligible impact, we monitor the relative empirical $L^2$-$L^\infty$ error given by
\begin{align*}
  \varepsilon_w^\infty(w_N)^2 := \frac{\sum_{i=1}^{N_{\mathrm{MC}}} \Vert w(y^{(i)}) - w_N(y^{(i)}) \Vert_{L^\infty(D)}^2}{\sum_{i=1}^{N_{\mathrm{MC}}} \Vert w(y^{(i)})) \Vert_{L^\infty(D)}^2},
  \qquad\mbox{for all } w, w_N\in L^2(\Gamma,\pi;L^\infty(D)).
\end{align*}
The choice of $N_{\mathrm{MC}}=250$ proved to be sufficient to obtain consistent estimates of the error in our experiments as well as in other works (cf.~\cite{EigelMarschall2020lognormal}).

\subsection{The stochastic model problem}%
\label{sec:experiments:model_problem}

In the numerical experiments, we consider the stationary diffusion problem~\eqref{eq:setup:darcy} with constant right-hand side $f(x,y) = 1$.
For the affine diffusion field, the expansion coefficients $\gamma_\ell$ enumerate planar Fourier modes in increasing total order and are given by $\gamma_0(x)=1$ and
\begin{align*}
  \gamma_\ell(x) = \frac{9}{10\zeta(\sigma)} \ell^{-\sigma} \cos\bigl(2\pi\beta_1(\ell) x_1\bigr)\, \cos\bigl(2\pi\beta_2(\ell) x_2\bigr),
  \qquad \ell=1,\dots,L,
\end{align*}
where $\zeta$ is the Riemann zeta function and, for $k(\ell) = \lfloor -\frac{1}{2} + \sqrt{\frac{1}{4} +2\ell} \rfloor$, $\beta_1(\ell) = \ell - k(\ell)(k(\ell)+1)/2$ and $\beta_2(\ell) = k(\ell) - \beta_1(\ell)$.
For our experiments we consider a slow ($\sigma=2$) and a fast ($\sigma=4$) decay rate to test if the adaptive algorithm captures the relevance of the higher order modes.
As we assume access to the diffusion coefficient only via a priori generated samples with a finite number of spatial observation points, we choose to discretize $a_N$ in the same finite element space as the solution, i.e.\ conforming Lagrange elements of order $p=1$ or $p=3$.
For the lognormal case~\ref{case:setup:lognormal}, we choose $\rho=1$ and $\vartheta=0.1$ similar to~\cite{EigelMarschall2020lognormal}.

\subsection{Tensor train representation of the diffusion coefficient}

The affine diffusion coefficient $\gamma$ and the constant right-hand side $f$ can be represented in TT format as described in Proposition~\ref{pro:discretization:exactTT_gamma}.

An approximation $\kappa_{N,s}$ of the lognormal coefficient field is computed via the Galerkin projection described in Section~\ref{sec:discretization:diff_coeff}.
In particular, we employ a scaling trick to account for numerical instabilities caused by the global polynomial approximation of an exponential function.
We choose a scaling $t\in\mathbb{N}$ and construct an approximation $\kappa_{N,s,t} \approx \exp(2^{-t}(\gamma-\gamma_0))$ via the Galerkin approach.
The diffusion coefficient $\kappa_{N,s}$ can then be recovered by squaring $\kappa_{N,s,t}$ $t$ times.
For a detailed description of the process we refer to the algorithms in~\cite{EFHT21}.

As scaling constant we choose $t=4$ and round $\kappa_{N,s,t}$ to a precision of $10^{-6}$ and project the stochastic space of each mode onto Hermite polynomials up to degree $10$ in each squaring step to reduce storage requirements.
The ALS algorithm computing the approximation $\kappa_{N,s,t}$ has a termination threshold of $10^{-8}$ that needs to be reached in the Frobenius norm of the difference of two successive iteration results.
This results in a relative approximation error $\varepsilon_{\kappa}^\infty(\kappa_{N,s})<10^{-4}$, which is at least one order of magnitude smaller then that of the solution $u_{N,r,n}$ in all experiments.

We also note that the exact TT representation of $\gamma$ has uniform ranks $s=L+1$, which results in large storage requirements throughout the computations.
Such large representation ranks are, however, not necessary for a sufficient numerical treatment, since the ranks decrease drastically upon rounding $\gamma$ to machine precision.
Reducing the ranks of $\gamma$ crucially decreases the required memory of both the estimator contributions and the Galerkin operator assembled during the \texttt{ExpTT} algorithm~\cite{EigelFarchmin2021expTT}, which is a limiting factor otherwise.
A detailed investigation of the approximation quality and representation ranks of $\kappa_{N,s}$ can be found in~\cite{EFHT21}.

\subsection{Adaptive convergence results}

The fully adaptive Algorithm~\ref{alg:aVMC} is instantiated with a single mode $M=1$ discretized with a linear polynomial, i.e.\ dimension $d_1=2$.
The initial spatial mesh consists of $\vert\mathcal{T}_0\vert=143$ triangles with $p=1$ ansatz functions and $\vert\mathcal{T}_0\vert=64$ with $p=3$ for the L-shaped domain.
For the initial triangulations of the unit square domain, we choose $\vert\mathcal{T}_0\vert=205$ triangles with $p=1$ ansatz functions and $\vert\mathcal{T}_0\vert=76$ with $p=3$.
The marking parameters are set to $\theta_{\mathrm{det}}=0.3$, $\theta_{\mathrm{sto}}=0.5$ and $\theta_{\mathrm{alg}}=0.3$, respectively,
and as threshold to increase the number of sampling points we choose $\varepsilon_G=10^{-3}$.
We start Algorithm~\ref{alg:aVMC} with $N_{\mathrm{VMC}}=100$ initial samples $\{y^{(i)}\}_{i=1}^{100}$ and choose $N_{\mathrm{iter}}=50$, $N_{\mathrm{TT}}=10^5$ and $\varepsilon=10^{-3}$ as stopping criteria for the algorithm.
Additionally, we terminate Algorithm~\ref{alg:aVMC} if the number of samples $N_{\mathrm{VMC}}$ exceeds $10^{5}$ as reconstruction times for the solution become prohibitive otherwise.

\begin{figure}[htp]
  \begin{center}
    \includegraphics[width=\textwidth]{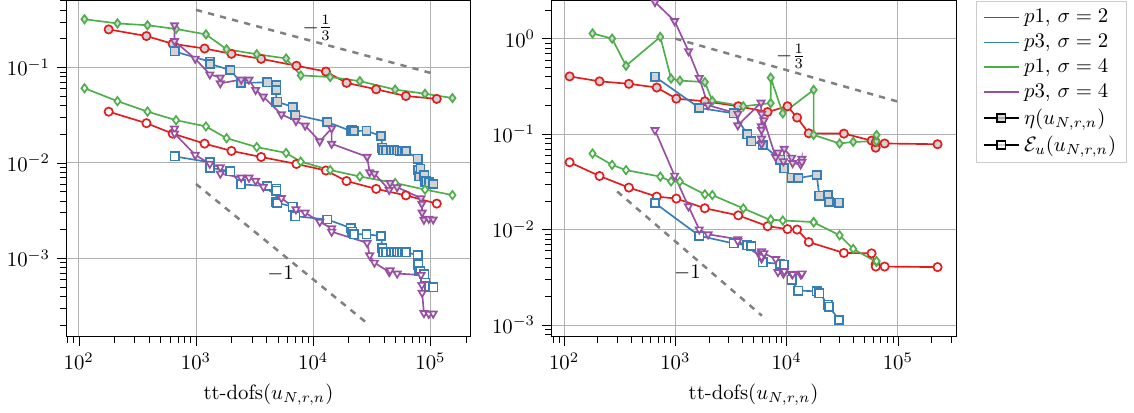}
  \end{center}
  \caption{%
    Sampled root mean square error $\mathcal{E}_u(u_{N,r,n})$ and total error estimator $\eta(u_{N,r,n})$ of the fully adaptive algorithm on the L-shaped domain.
    Considered are finite element approximations of order $p=1$ and $p=3$ for slow ($\sigma=2$) and fast ($\sigma=4$) decay for the affine (left) and lognormal (right) case.
  }%
  \label{fig:convergence_lshape}
\end{figure}

Figure~\ref{fig:convergence_lshape} depicts the true sampled root mean squared $H_0^1(D)$ error $\mathcal{E}_u(u_{N,r,n})$ and the corresponding overall error estimator $\eta(u_{N,r,n})$ for the affine and lognormal case, respectively.
Depicted are combinations of slow ($\sigma=2$) and fast ($\sigma=4$) decay rates and finite element discretization degrees $p=1$ and $p=3$ on the L-shaped domain.

We observe that there is no significant difference in the error and estimator magnitudes in the experiments between the two different computational domains.
For the results on the unit square domain, we refer to Supplement~\ref{supplement:unit_square}.
The experiments show that the deterministic estimator contribution $\eta_{\mathrm{det}}$ captures the singularity of the L-shaped domain and prioritizes to refine the mesh at the reentrant corner as known from deterministic adaptive FE methods.

The $p=3$ FE discretizations converge at roughly thrice the rate of the $p=1$ cases, which is expected.
The rates for both FE discretizations are the same as observed in~\cite{EGSZ14,BespalovPraetorius2019convergence,Bespalov2018,Bespalov2021twolevel} in the affine case.
For the lognormal case, to the knowledge of the authors only~\cite{EMPS20} has presented reliable error estimation yet.
Our results are again very similar to the results reported previously.

In both magnitude and convergence rate we observe almost no difference between the two decay rates.
The only exception to this is the $p=3$ FE discretization for the affine case, where the error and estimator for $\sigma=4$ reached slightly smaller values.

The obtained convergence rates for the respective affine and the lognormal experiments are the same, which indicates that the adaptive algorithm works robustly independent of the specific choice of the diffusion coefficient.
We observe that the overall magnitudes of error and estimator are slightly larger in the more involved lognormal case, but the estimator consistently overestimates the error by a factor of approximately $10$.
We also note that both error and estimator values are of the same order of magnitude with respect to the degrees of freedom as in~\cite{EGSZ14,BespalovPraetorius2019convergence,Bespalov2018,Bespalov2021twolevel,EMPS20}.

The most obvious difference between the two diffusion coefficient types is the smoothness of the numerical convergence graphs.
In the affine case the error and error estimator decrease monotonously, whereas they might sometimes increase after refinement in the lognormal case~\ref{case:setup:lognormal}.
Interestingly, this is most prevalent when considering a fast decay rate of $\sigma=4$.
Nevertheless, this behaviour is expected since the reconstruction accuracy of the forward problem depends on the set of samples that is used in the training.
The samples are drawn randomly and we probably do not use sufficiently many samples to satisfy the restricted isometry properties required for the empirical reconstruction of the solution with high probability (cf.~\cite{EigelTrunschke2020}).
Consequently, the VMC algorithm might not find a strictly better (or equally good) approximation in each step.
In particular, we observe that error and estimator might increase after refinement of the spatial mesh $\mathcal{T}$ or enlarging the stochastic approximation space $\Lambda_d$ when the amount of training samples is not sufficiently increased yet.

We also observe that it is possible for the error and estimator to stagnate for several iterations before continuing convergence.
This is an artifact of the refinement strategy.
We increase the dimensions of modes by at most one in each step of the adaptive algorithm, which might not be sufficient for a reduction of the error.

\subsection{Representation complexity of the solution}

In this section we discuss the memory complexity of the TT representation of the solution for the experiments during the adaptive refinement of Algorithm~\ref{alg:aVMC}.
Figure~\ref{fig:ranks_dims_order_lshape} shows the growth in the number of active modes, their respective dimensions and the representation ranks of $u_{N,r,n}$ for the affine and lognormal case, respectively.
\begin{figure}[htpb]
  \begin{center}
    \includegraphics[width=\textwidth]{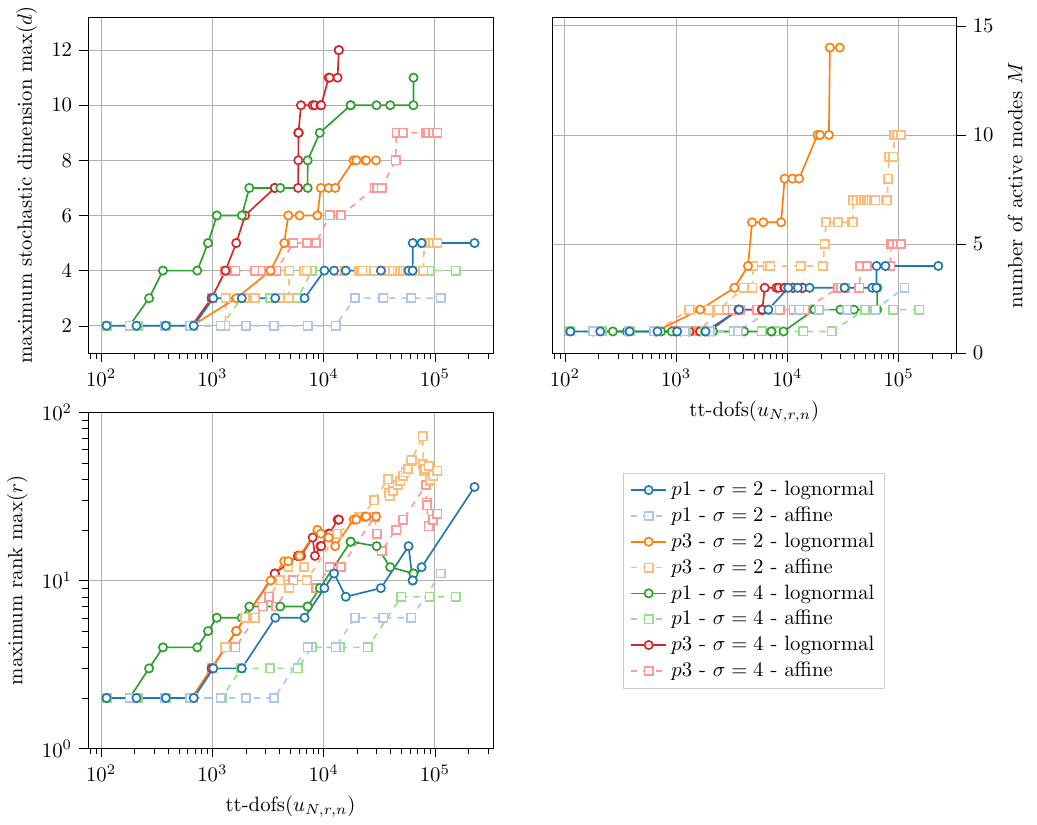}
  \end{center}
  \caption{%
    Maximum dimensions, ranks and number of active modes of the solution $u_{N,r,n}$ with respect to the $\operatorname{tt-dofs}$ of $u_{N,r,n}$ for the affine and lognormal case on the L-shaped domain.
  }%
  \label{fig:ranks_dims_order_lshape}
\end{figure}

In the affine case we observe the expected behaviour, namely that for the fast decay $\sigma=4$ the refinement Algorithm~\ref{alg:aVMC} focuses on increasing the dimensions for active modes whereas in the experiments with slow decay $\sigma=2$ activating new modes is prioritized independently on the computational domain.
We note that the dimension $d_1$ of the first mode is always the largest.
Moreover, the dimensions $d_m$ decrease monotonously as $m$ increases.
Algorithm~\ref{alg:aVMC} requires two to three times more iterations if cubic basis functions $p=3$ are used in the FE discretization.
This is again expected bahaviour since the mesh $\mathcal{T}$ needs to be refined less often due to the better approximation properties of higher order FE methods.
In the bottom left graph of Figure~\ref{fig:ranks_dims_order_lshape} we see that the maximal representation rank of the solution $u_{N,r,n}$ is up to six times larger for the experiments where we let $p=3$.
It seems that the maximal rank increases with an almost constant rate with respect to the $\operatorname{tt-dofs}(u_{N,r,n})$.
One of the first three ranks is always the largest with $r_1$ dominating in almost all cases.
The ranks $r_m$ decrease monotonously as $m$ increases beyond the index of the maximal rank.

In the lognormal case, Algorithm~\ref{alg:aVMC} behaves as expected as well, i.e.\ $\sigma=2$ requires the activation of more modes and $\sigma=4$ prioritizes larger polynomial degrees in the first modes.
The overall number of active modes is similar to the affine case and the maximal polynomial degree is slightly larger.
We suppose that the latter stems from the more complicated structure of $u_{N,r,n}$ for the lognormal diffusion coefficient $\kappa$.
The ranks also increase at an overall constant rate with respect to $\operatorname{tt-dofs}(u_{N,r,n})$ although the variation of coefficient realizations in case~\ref{case:setup:lognormal} is larger than in case~\ref{case:setup:affine}.
A comparison to~\cite{EMPS20} shows that the number of active modes and the maximal dimensions and ranks behave similarly in growth rate and magnitude.

Table~\ref{tab:number_of_samples} depicts the number of training samples $N_\mathrm{VMC}$ used in the last iteration before Algorithm~\ref{alg:aVMC} terminates.
Most notably we see that the lower order FE discretizations with $p=1$ require up to two orders of magnitude fewer training samples during the algorithm.
Moreover, the number of samples for the affine case with $p=1$ are not increased during any iteration of Algorithm~\ref{alg:aVMC}.
This can be explained by the high regularity of the solution $u$ of~\eqref{eq:setup:darcy} and the relatively large spatial error of the low-order FE discretization.
Opposite to this, Algorithm~\ref{alg:aVMC} terminates for the $p=3$ lognormal experiments as $N_{\mathrm{VMC}}$ exceeds $10^5$, which can be explained by the same argument.
This also causes the adaptive algorithm to terminate prematurely for fast decay $\sigma=4$ with $p=3$ in case~\ref{case:setup:lognormal}.

\begin{table}[htpb]%
    \centering
    \ra{1.1}  %
    \begin{tabular}{rcrrcrr}\toprule
             & & \multicolumn{2}{c}{\textbf{affine}} & \phantom{abc} & \multicolumn{2}{c}{\textbf{lognormal}}\\
      \cmidrule{3-4} \cmidrule{6-7}
             & & $\sigma=2$ & $\sigma=4$ & & $\sigma=2$ & $\sigma=4$\\
      \midrule
      $p=1$  & & $100$   & $100$  & & $1600$ & $5377$ \\
      $p=3$  & & $13280$ & $3280$ & & $3752$ & $162424$ \\
      \bottomrule
    \end{tabular}
    \caption{%
      Number of training samples $N_\mathrm{VMC}$ used in the last iteration of Algorithm~\ref{alg:aVMC} for all experiments on the L-shaped domain.
    }%
    \label{tab:number_of_samples}
\end{table}

\subsection{Refinement strategy evaluation}

In this section we take a detailed look at the concrete refinement decisions of Algorithm~\ref{alg:mark_and_refine}.
Figure~\ref{fig:level_refinement} depicts the error $\mathcal{E}_u(u_{N,r,n})$, the total estimator $\eta(u_{N,r,n})$ and the three estimator contributions $\eta_{\mathrm{det}}(u_{N,r,n})$, $\eta_{\mathrm{sto}}(u_{N,r,n})$ and $\eta_{\mathrm{alg}}(u_{N,r,n})$ exemplarily for the lognormal experiments on the L-shaped domain for $p=1$ with fast decay $\sigma=4$ (left) and $p=3$ with slow decay $\sigma=2$ (right).
The background patterns indicate by color which of the estimator parts dominate in the respective iteration.
The background hatches display the quantity that Algorithm~\ref{alg:mark_and_refine} chooses to increase based on the dominating estimator contribution and the estimated minimal Gramian eigenvalue.
To provide a more detailed view on the stochastic refinement, we distinguish between an increase of the dimension of already active modes and the activation of additional modes.

\begin{figure}[htp]
  \begin{center}
    \includegraphics[width=\textwidth]{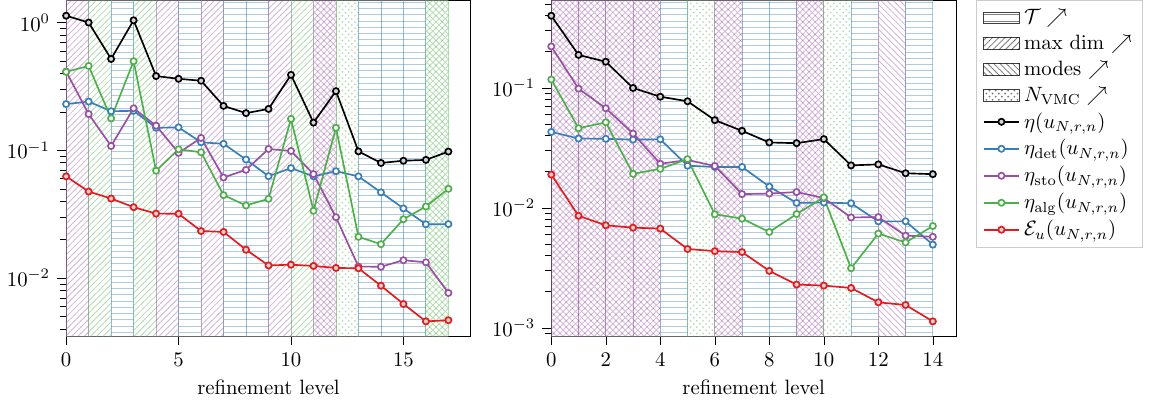}
  \end{center}
  \caption{%
    Error $\mathcal{E}_u(u_{N,r,n})$, total estimator $\eta(u_{N,r,n})$ and estimator contributions for each refinement level for the lognormal case on the L-shaped domain for $p=1$ with $\sigma=4$ (left) and $p=3$ with $\sigma=2$ (right).
    The background displays the dominating estimator contribution (color) and the resulting refinement strategy (hatching).
    The patterns are superimposed when multiple quantities are refined simultaneously.
  }%
  \label{fig:level_refinement}
\end{figure}

In both cases displayed in Figure~\ref{fig:level_refinement} we observe that each estimator contribution dominates in at least one iteration.
This confirms that adaptive refinement of finite element space, stochastic space and number of training samples is essential to reduce the approximation error.
In the case of the linear FE space $p=1$ with fast decay $\sigma=4$ (Figure~\ref{fig:level_refinement}, left), we note that even though $\eta_{\mathrm{alg}}(u_{N,r,n})$ dominates several times in the first few iterations, Algorithm~\ref{alg:resample} causes an increase of the VMC training samples only after extensively refining the spatial mesh $\mathcal{T}$ and the dimension of the first mode $d_1$.
Due to the fast decay $\sigma=4$, the number of modes has to be increased only once and only after extensive refinement of the other quantities.
In the case of the cubic FE space $p=3$ with slow decay $\sigma=2$ (Figure~\ref{fig:level_refinement}, right), the emphasis of Algorithm~\ref{alg:mark_and_refine} lies on the refinement of the stochastic space in the first iterations.

\section*{Acknowledgements}
M.\ Eigel acknowledges the partial support of the DFG SPP 1886 ``Polymorphic Uncertainty Modelling for the Numerical Design of Structures''.
N.\ Farchmin has received funding from the German Central Innovation Program (ZIM) No. ZF4014017RR7.
P.\ Trunschke acknowledges support by the Berlin International Graduate School in Model and Simulation based Research (BIMoS).
The authors would like to thank Robert Gruhlke, Manuel Marschall and Reinhold Schneider for fruitful discussions.

\begin{appendices}
    \section{Orthogonal polynomial basis functions}%
\label{supplement:normalization_and_triple_products}

\paragraph{Legendre polynomials}

The standard Legendre polynomials $\tilde L_j$ of degree $j\in\mathbb{N}_0$ constitute an orthogonal basis of $L^2(\Gamma_\ell,\pi_\ell)$ for $\Gamma_\ell=[-1,1]$ and $\pi_\ell(y_\ell)\equiv1/2\,\mathrm{d}y_{\ell}$.
With $c_j = \sqrt{2j+1}$ the set of polynomials $\{ L_j = c_j\tilde L_j \}_{j=0}^\infty$ is orthogonal and normalized with respect to $L^2(\Gamma_\ell,\pi_\ell)$.
To derive an analytical expression of the triple product $\tau_{ijk}=\mathbb{E}_{\pi_{\ell}}[L_i L_j L_k]$ of the normalized Legendre polynomials for any $i,j,k\in\mathbb{N}_0$, we define
\begin{align*}
  A_{\mathrm{Leg}}(n) = (2n)!\, 2^{-n} (n!)^{-2}.
\end{align*}
The triple product $\tau_{ijk}$ is then given by
\begin{equation}
  \label{eq:supplement:triple_product_leg}
  \begin{aligned}
    \tau_{ijk} = 
    \begin{cases}
      \frac{c_i c_j c_k}{(2s+1)}\,\frac{A_{\mathrm{Leg}}(s-i)A_{\mathrm{Leg}}(s-j)A_{\mathrm{Leg}}(s-k)}{A_{\mathrm{Leg}}(s)} & \mbox{if } s\in\mathbb{N}_0\mbox{ and }s\geq\max\{ i,j,k \},\\
      0 & \mbox{else},
    \end{cases}
  \end{aligned}
\end{equation}
where $s=(i+j+k)/2$.

\paragraph{Hermite polynomials}

The standard probabilists Hermite polynomials $\tilde H_j$ of degree $j\in\mathbb{N}_0$ constitute an orthogonal basis of $L^2(\Gamma_\ell,\pi_\ell)$ for $\Gamma_\ell=\mathbb{R}$ and $\pi_\ell(y_\ell)=(2\pi)^{-1/2}\exp(-y_\ell^2/2)\,\mathrm{d}y_{\ell}$.
With $c_j = 1 / \sqrt{j!}$ the set of polynomials $\{ H_j = c_j\tilde H_j \}_{j=0}^\infty$ is orthogonal and normalized with respect to $L^2(\Gamma_\ell,\pi_\ell)$.
To derive an analytical expression of the triple product $\tau_{ijk}=\mathbb{E}_{\pi_{\ell}}[H_i H_j H_k]$ of the normalized Hermite polynomials for any $i,j,k\in\mathbb{N}_0$, we define
\begin{align*}
  A_{\mathrm{Her}}(i,j,k) = \Bigl(\frac{i+j+k}{2}\Bigr)!.
\end{align*}
With the condition
\begin{align}
  \label{eq:supplement:condition_hermite}
  \frac{1}{2}(i+j+k)\in\mathbb{N}_0
  \quad\mbox{and}\quad
  \frac{1}{2}(i+j+k) \geq \max\{i,j,k\},
\end{align}
the triple product $\tau_{ijk}$ is given by
\begin{equation}
  \label{eq:supplement:triple_product_herm}
  \begin{aligned}
    \tau_{ijk} = 
    \begin{cases}
      c_i c_j c_k\,\frac{i!j!k!}{A_{\mathrm{Her}}(i,j,-k)A_{\mathrm{Her}}(i,-j,k)A_{\mathrm{Her}}(-i,j,k)} & \mbox{if } i,j,k \mbox{ satisfy~\eqref{eq:supplement:condition_hermite}},\\
      0 & \mbox{else}.
    \end{cases}
  \end{aligned}
\end{equation}

\paragraph{Scaled Hermite polynomials}

For Gaussian distributions $\mathcal{N}(0,\sigma^2)$ with $\sigma\neq1$ it is easy to scale the standard probabilists Hermite polynomials to obtain an orthogonal basis.
Recall from Section~\ref{sec:setup} that the univariate density for $\mathcal{N}(0,\sigma_\ell(\rho)^2)$ is given by
\begin{align*}
  \pi_\ell(y_\ell;\sigma_\ell(\rho)) = \zeta_{\ell}(y_\ell,\sigma_\ell(\rho))\, \pi_\ell(y_\ell,1)
\end{align*}
for $\sigma_\ell(\rho)=\exp(\rho\Vert\gamma_\ell\Vert_{L^\infty(D)})$ and $\rho\geq0$.
Note that $\sigma(\rho) = (\sigma_1(\rho), \sigma_2(\rho),\dots)\in\exp(\ell^1(\mathbb{N}))$ and define the transformation
\begin{align*}
  \iota_{\sigma(\rho)}\colon \mathbb{R}^\infty \to \mathbb{R}^\infty,
  \qquad (y_\ell)_{\ell\in\mathbb{N}} \mapsto (\sigma_\ell(\rho)^{-1} y_\ell)_{\ell\in\mathbb{N}}.
\end{align*}
With this we can define the multivariate scaled Hermite polynomials $H_\mu^{\rho} := H_\mu\circ\iota_{\sigma(\rho)}$, where $H_\mu$ is the multivariate and normalized standard probabilists Hermite polynomial.
The set $\{ H_\mu^{\rho} \}_{\mu\in\mathcal{F}}$ thus forms an orthogonal and normalized basis of $L^2(\Gamma_\kappa,\pi_\rho)$.
Moreover, for any $w\in L^2(\Gamma_\kappa,\pi_\rho)$ it holds 
\begin{align*}
  \int_{\Gamma_\kappa} w \,\mathrm{d}\pi_{0}(y)
  = \int_{\Gamma_\kappa} w\circ\iota_{\sigma(\rho)} \,\mathrm{d}\pi_{\rho},
\end{align*}
which implies that $\mathbb{E}_{\pi_{\rho}}[H_i^{\rho} H_j^{\rho} H_k^{\rho}] = \mathbb{E}_{\pi_0}[H_i H_j H_k]$ for any $i,j,k\in\mathbb{N}_0$.
Hence, the triple product with respect to the scaled Hermite polynomials can be computed by~\eqref{eq:supplement:triple_product_herm} as well.

\section{Well-posedness of the lognormal case}%
\label{supplement:well_posedness_lognormal}

First we note that by~\cite[Lemma 2.1]{Hoang2014} the set $\Gamma_{\kappa}$ from~\eqref{eq:setup:Gamma_kappa} is measurable and it holds $\pi_0(\Gamma_{\kappa})=1$.
Moreover, Lemma $2.2$ of~\cite{Hoang2014} shows that the lognormal field $\kappa$ is bounded and positive.
However, boundedness and positivity only hold pointwise and not uniformly over the parameter space $\Gamma_\kappa$.
Even though this leads to a far more intricate analysis, following the arguments of~\cite{SchwabGittelson2011} it is still possible to obtain a well-defined variational formulation by the introduction of the stronger measure $\pi_{\vartheta\rho}$ from~\eqref{eq:setup:zeta_and_pi} for some $\rho>0$ and $0<\vartheta<1$.
Recall that the bilinear form in the lognormal case~\ref{case:setup:lognormal} is given by
\begin{align*}
  B_{\vartheta\rho}(w,v) = \int_{\Gamma_\kappa} \int_{D}  \kappa \grad w\cdot\grad v \,\mathrm{d} x\,\mathrm{d} \pi_{\vartheta\rho}(y)
\end{align*}
and that the solution space is defined via
\begin{align*}
  \mathcal{V}_{\vartheta\rho} = \{ w\colon\Gamma_\kappa\to\mathcal{X} \mbox{ measurable with }B_{\vartheta\rho}(w,w)<\infty \}.
\end{align*}
By~\cite[Proposition 2.43]{SchwabGittelson2011} it then follows that
\begin{align*}
  L^2(\Gamma_\kappa,\pi_{\rho};\mathcal{X})
  \subset \mathcal{V}_{\vartheta\rho}
  \subset L^2(\Gamma_\kappa,\pi_{0};\mathcal{X})
  \qquad\mbox{for any }0<\vartheta<1
\end{align*}
are continuous embeddings.
With this, Lemma~$2.41$ and Lemma~$2.42$ from~\cite{SchwabGittelson2011} show that the bilinear form $B_{\vartheta\rho}$ is $\mathcal{V}_{\vartheta\rho}$--elliptic and bounded in the sense that
\begin{align}
  \label{eq:supplement:boundedness_B}
  \vert B_{\vartheta\rho}(w,v) \vert
  &\leq \hat{c}(\vartheta\rho) \Vert w\Vert_{L^2(\Gamma_\kappa,\pi_{\rho};\mathcal{X})} \Vert v\Vert_{L^2(\Gamma_\kappa,\pi_{\rho};\mathcal{X})}
  &\mbox{for all }w,v\in L^2(\Gamma_\kappa,\pi_{\rho};\mathcal{X}),\\
  \label{eq:supplement:coercivity_B}
  B_{\vartheta\rho}(w,w) 
  &\geq \check{c}(\vartheta\rho) \Vert w\Vert_{L^2(\Gamma_\kappa,\pi_{0};\mathcal{X})}^2
  &\mbox{for all }w\in L^2(\Gamma_\kappa,\pi_{0};\mathcal{X}).
\end{align}

\section{The Tensor Train format}%
\label{supplement:tt_format}

In the following we briefly recall the TT format and some fundamental properties of TTs.
For a more detailed overview, we refer the reader to~\cite{Bachmayr2016,Oseledets2009,Holtz2012,EigelPfeffer2016,Rohrbach2020} and the references therein.
Any function $w_N\in\mathcal{V}_N(\Lambda_d;\mathcal{T},p)$ has an expansion of the form
\begin{align}
  \label{eq:discretization:expansion_of_function}
  w_N(x,y) = \sum_{j\in[\vert\mathcal{T}\vert]} \sum_{\mu\in\Lambda_d} \discretized{w}[j,\mu] \varphi_j(x) P_\mu(y)
  \qquad\mbox{with}\qquad
  \discretized{w} \in \mathbb{R}^{\vert\mathcal{T}\vert\times d_1 \times\dots\times d_M}.
\end{align}
Hence, $\mathcal{V}_N$ is isomorphic to the space of coefficient tensors $\mathbb{R}^{\vert\mathcal{T}\vert\times d} = \mathbb{R}^{\vert\mathcal{T}\vert\times d_1\times\dots\times d_M}$.
Note that the size of the coefficient tensor $\discretized{w}$ grows exponentially with the order $M$, which is commonly referred to as the \emph{curse of dimensionality}.
To mitigate this exponential dependence on $M$, we employ a low-rank decomposition of the tensor $\discretized{w}$.
There are many tensor decompositions available~\cite{Grasedyck2013,Hac12,Khoromskij2015,Kolda2009}, but due to its simplicity and the wide availability in numerical libraries we chose the TT format for our derivations.
The TT representation of a tensor $\discretized{w}\in\mathbb{R}^{J\times d}$, $J\in\mathbb{N}$, is given by
\begin{align}
  \label{eq:discretization:TTformat_of_function}
  \discretized{w}[j,\mu]
  &= \sum_{k_1=1}^{r_1}\dots\sum_{k_M=1}^{r_M}\discretized{w}_0[j,k_1]\prod_{m=1}^{M} \discretized{w}_m[k_m,\mu_m,k_{m+1}]
  \quad\mbox{for any }j\in[J]\mbox{ and }\mu\in \Lambda_d\\
  &=: \sum_{k=1}^{r}\discretized{w}_0[j,k_1]\prod_{m=1}^{M} \discretized{w}_m[k_m,\mu_m,k_{m+1}]
\end{align}
for some $r\in\mathbb{N}^M$, where we use the convention $r_{M+1}=1$.
In our application the zeroth component tensor $\discretized{w}_0\in\mathbb{R}^{J\times r_1}$ corresponds to the spatial discretization.
The stochastic contributions are given as order three tensors $\discretized{w}_m\in\mathbb{R}^{r_m\times d_m\times r_{m+1}}$ for each mode.
If all ranks $r_m$ are minimal, this is called \emph{tensor train decomposition} of $\discretized{w}$ with TT rank $r$ and we write $\operatorname{tt-rank}(\discretized{w}) = r$.
The set of all tensors of TT rank $r$ forms a manifold~\cite{HRS12} of dimension
\begin{align*}
  \operatorname{tt-dofs}(\discretized{w})
  = J r_{1} - r_{1}^{2} + \sum_{m=1}^{M-1} \bigl(r_m d_m r_{m+1} - r_{m+1}^{2}\bigr) + r_M d,
\end{align*}
which shows that the complexity of the TT format behaves like $\mathcal{O}(J\hat{r}+M\hat{d}\hat{r}^2)$ for $\hat{d}=\max\{ d_1,\ldots,d_M \}$ and $\hat{r}=\max\{ r_1,\ldots,r_M \}$.
In contrast to full tensor representations, with complexity $\mathcal{O}(J\hat{d}^{M})$, TTs depend only linearly on the order $M$.
As a result, the TT format is especially efficient for a small maximal rank $\hat{r}$.

Similarily we can express linear operators $W\colon \mathcal{V}_N(\Lambda_d;\mathcal{T},p)\to\mathcal{V}_{N}(\Lambda_q;\mathcal{T},p)$ in the TT format.
For this recall that the application of $W$ to $v_N\in\mathcal{V}_N(\Lambda_d;\mathcal{T},p)$ yields
\begin{align*}
  Wv_N(x,y) = \sum_{i\in[N]} \sum_{\nu\in\Lambda_q} \sum_{j\in[N]} \sum_{\mu\in\Lambda_d} \discretized{W}[i,\nu;j,\mu]\discretized{v}[j,\mu] \phi_i(x)P_{\nu}(y).
\end{align*}
The TT representation of the tensor operator $\discretized{W}\colon\mathbb{R}^{J\times d}\to\mathbb{R}^{J\times q}$ is thus determined by
\begin{align*}
  \discretized{W}[i,\nu;j,\mu]
  &= \sum_{k=1}^{r} \discretized{W}_0[i,j,k_{1}] \prod_{m=1}^{M} \discretized{W}_m[k_m,\nu_m,\mu_m,k_{m+1}]
\end{align*}
for any $i,j\in[J]$, $\mu\in\Lambda_d$ and $\nu\in\Lambda_q$ with component tensors $\discretized{W}_0\in\mathbb{R}^{J\times J\times r_{1}}$ and $\discretized{W}_m\in\mathbb{R}^{r_m\times q_m\times d_m\times r_{m+1}}$.
The TT decomposition always exists and can be computed using the hierarchical singular value decomposition (SVD)~\cite{HRS12}.
A truncated hierarchical SVD leads to quasi-optimal approximations of the TT decomposition in the Frobenius norm~\cite{OT09,Gra09,Loe78,HS14}.
This can also be applied to tensors which are already given in the TT format to obtain a TT decomposition with a lower rank.
This process is referred to as \emph{rounding}.

\section{Experiments on the unit square domain}%
\label{supplement:unit_square}

Additional figures for convergence of the adaptive algorithm and complexity of the solution $u_{N,r,n}$ on the unit square $D=(0,1)^2$.
Details on the experiments are described in Section~\ref{sec:experiments}.

\begin{figure}[htp]
  \begin{center}
    \includegraphics[height=4.5cm]{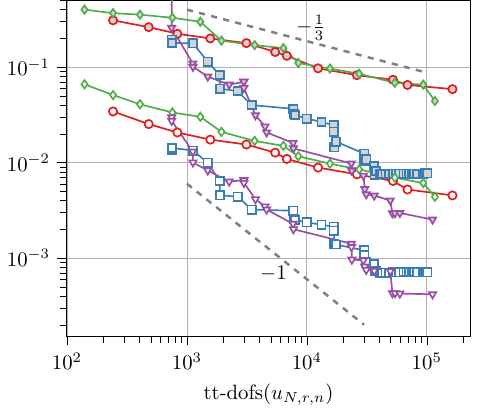}
    \includegraphics[height=4.5cm]{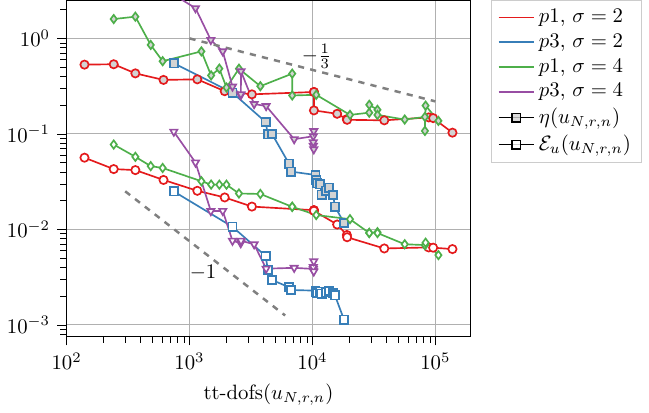}
  \end{center}
  \caption{%
    Sampled root mean square error $\mathcal{E}_u(u_{N,r,n})$ and total error estimator $\eta(u_{N,r,n})$ of the fully adaptive algorithm on the unit square domain.
    Considered are finite element approximations of order $p=1$ and $p=3$ for slow ($\sigma=2$) and fast ($\sigma=4$) decay for the affine (left) and lognormal (right) case.
  }%
  \label{fig:convergence_unit_square}
\end{figure}

\begin{figure}[htpb]
  \begin{center}
    \includegraphics[height=9cm]{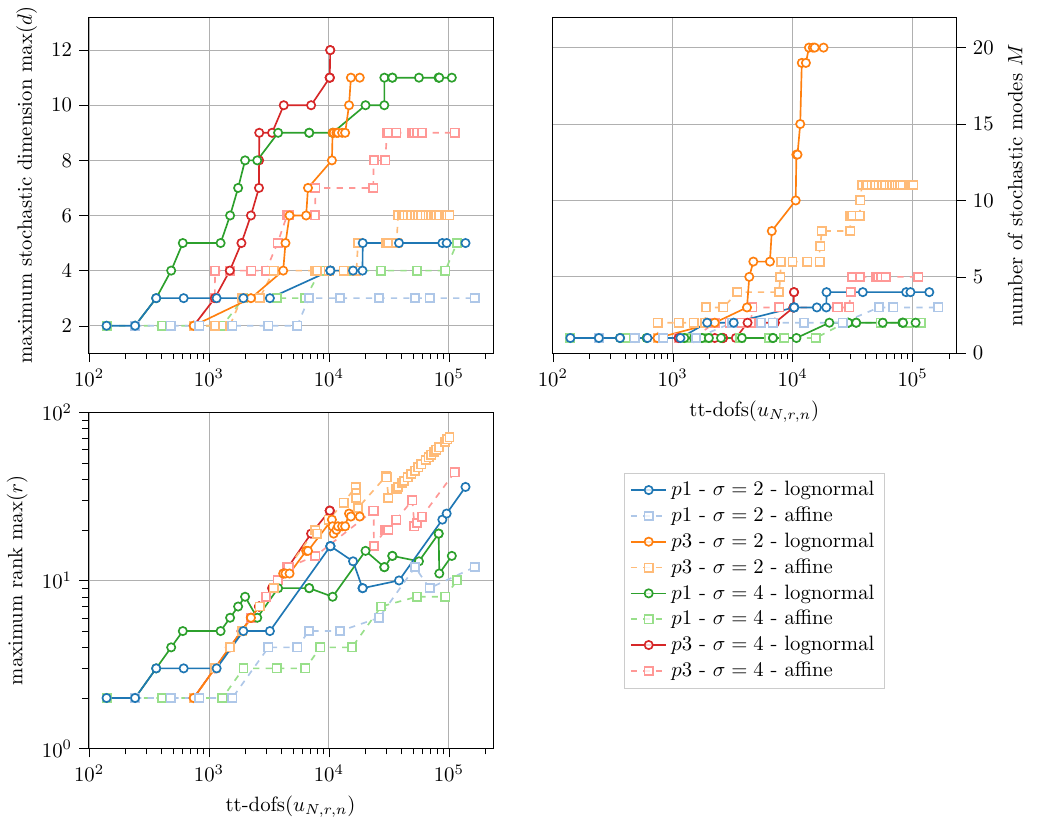}
  \end{center}
  \caption{%
    Maximum dimensions, ranks and number of active modes of the solution $u_{N,r,n}$ with respect to the $\operatorname{tt-dofs}$ of $u_{N,r,n}$ for the affine and lognormal case on the unit square domain.
  }%
  \label{fig:ranks_dims_order_unitSquare}
\end{figure}

\end{appendices}

\printbibliography

\end{document}